\theoremstyle{plain}
\newtheorem{prop}{Proposition}[section]
\newtheorem{lemma}[prop]{Lemma}
\newtheorem{thm}[prop]{Theorem}
\newtheorem{cor}[prop]{Corollary}
\theoremstyle{definition}
\newtheorem{defi}[prop]{Definition}
\theoremstyle{remark}
\newtheorem*{ack}{Acknowledgement}
\newcommand{\masterint}{X}
\DeclareMathOperator{\RF}{RF} 
\DeclareMathOperator*{\starlim}{\star-lim}
\newcommand{\leb}{m} 
\newcommand{\FM}{F} 
\newcommand{\ME}{T} 
\newcommand{\cw}{\lambda} 
\newcommand{\inter}{I} 
\newcommand{\prt}{\xi} 
\renewcommand{\emptyset}{\varnothing} 
\renewcommand{\tilde}{\widetilde}
\newcommand{\one}{\mathds{1}}
\newcommand{\dd}{\:\mathrm{d}}
\newcommand{\TO}{\mc L}
\newcommand{\PF}{\mc P}
\newcommand{\HH}{\mathbb{H}}
\DeclareMathOperator{\GL}{GL}
\DeclareMathOperator{\SL}{SL}
\DeclareMathOperator{\PSL}{PSL}
\DeclareMathOperator{\PGL}{PGL}
\DeclareMathOperator{\Stab}{Stab}
\newcommand{\st}{\text{st}}
\newcommand\N{\mathbb{N}}
\newcommand\Q{\mathbb{Q}}
\newcommand\R{\mathbb{R}}
\newcommand\Z{\mathbb{Z}}
\newcommand\C{\mathbb{C}}
\newcommand{\h}{\mathbb{H}}
\newcommand{\mc}[1]{\mathcal #1}
\newcommand{\wt}{\widetilde}
\newcommand{\wh}{\widehat}
\newcommand{\eps}{\varepsilon}
\DeclareMathOperator{\dist}{dist}
\DeclareMathOperator{\id}{id}
\DeclareMathOperator{\Fct}{Fct}
\newcommand{\bmat}[4]{\begin{bmatrix} #1&#2\\#3&#4\end{bmatrix}}
\newcommand{\textbmat}[4]{\left[\begin{smallmatrix} #1&#2 \\ #3&#4
\end{smallmatrix}\right]}
\begin{document}

\title[Equidistribution of cusp points]{Equidistribution of cusp points\\ of  Hecke triangle groups}

\author{Laura Breitkopf}
\author{Marc Kesseb\"ohmer}
\author{Anke Pohl}
\address{University of Bremen, Department~3
-- Mathematics, Institute for Dynamical Systems, Bibliothekstr.~5, 28359 Bremen, Germany}
\email{\{breitlau, mhk, apohl\}@uni-bremen.de}

\begin{abstract} 
In the framework of infinite ergodic theory, we derive equidistribution results for suitable weighted sequences of cusp points of Hecke triangle groups encoded by group elements of constant word length with respect to a set of natural generators.  This is a generalization of the corresponding results for the modular group, for which we rely on advanced results from infinite ergodic theory and transfer operator techniques  developed for AFN-maps.
\end{abstract}

\subjclass[2020]{Primary: 11J70, 37F32, 37E05; Secondary: 37D40, 37A44}
\keywords{Equidistribution results, Hecke triangle groups, cusp points, infinite ergodic theory,  transfer operators,  AFN-maps, generalized Farey maps, generalized Stern--Brocot sequences.}
\maketitle

\tableofcontents

\section{Introduction and statement of main results}\label{sec:intro}

The classical Stern--Brocot sequence $(\mc S_n)_{n\in\N_0}$ partitions the rational numbers in the interval~$[0,1]$ into subsets by sorting them by means of iterated mediants starting from the two seminal reduced fractions~$0/1$ and~$1/1$. Here, a \emph{mediant} of two reduced fractions~$a/b$ and~$c/d$ means the fraction~$(a+c)/(b+d)$. In other words, with a view to the (modified) Stern--Brocot tree of iterated mediants as indicated in Figure~\ref{fig:SBtree}, the set $\mc S_n$ consists of the (reduced) fractions that are new at tree level~$n$. Thus
\[
\mc S_0 = \left\{ \frac01, \frac11 \right\}\,,\quad \mc S_1 = \left\{\frac12\right\}\,,\quad \mc S_2 = \left\{\frac13, \frac23\right\}\,,\quad \mc S_3 = \left\{ \frac14, \frac25, \frac35, \frac34\right\}\,,\quad \ldots\,.
\]
\begin{figure}
\centering
\begin{forest}
  Stern Brocot/.style n args={5}{%
    content=$\frac{\number\numexpr#1+#3\relax}{\number\numexpr#2+#4\relax}$,
    if={#5>0}{
      append={[,Stern Brocot={#1}{#2}{#1+#3}{#2+#4}{#5-1}]},
      append={[,Stern Brocot={#1+#3}{#2+#4}{#3}{#4}{#5-1}]}
    }{}}
[,Stern Brocot={0}{1}{1}{1}{3}]
\end{forest}
\caption{The  Stern--Brocot tree restricted to $[0,1]$, starting at tree level~$1$.}
\label{fig:SBtree}
\end{figure}

The number-theoretical sequence of the Stern--Brocot sets~$(\mc S_n)_{n\in\N_0}$ is well studied and subject of a correspondingly vast body of literature. See, e.g., \cite{MR2854637,MR2864376,MR3247168,MR3809715,MR3976469,MR4102624,MR4430102}.  Also its  connection to Minkowski's question mark function has   gained some interest in the last decades \cite{MR2444218,MR3319502,MR3411226,MR3692125,MR3709533,MR3977887,MR4221205}.  Starting from intensive investigations in the theoretical physics literature, the closely connected \emph{Farey fractional spin chain} has been an important model for thermodynamic systems with first and second order phase transitions showing also intermittent behavior as a dynamical system  \cite{MR1700150,MR1966324,MR2096044,MR2227090,MR2594905,MR2773858,MR4661063}.
Enumerating the elements of $\mc S_n$ for $n\geq1$ in increasing order, say
\[
\mc S_{n}=\{r^{n}_{\ell}:\ell=1,\ldots, 2^{n-1}\}\,,
\]
allows us to define the family of \emph{even Stern--Brocot intervals}
\[
E_{n}\coloneqq\left\{\left[r^{n+1}_{2\ell-1},r^{n+1}_{2\ell}\right] : \ell=1,\ldots,2^{n-1} \right\}\,,\quad n\in \N\,,
\]
e.g.,
\[
E_{1}=\left\{\left[\frac13,\frac23\right]\right\}\,,\quad E_{2}=\left\{\left[\frac14, \frac25\right],\ \left[\frac35, \frac34\right]\right\}\,,\quad\ldots.
\]
At the center of the investigations of the Farey fractional spin chain is the \emph{even Stern--Brocot partition function}
\[
Z_{n}(t)\coloneqq \sum _{I\in E_{n}}|I|^{t}\,,
\]
which was first given in this form by Feigenbaum, Procaccia and T\'el in~\cite{MR0998529}, and the corresponding pressure function
\[
P(t)\coloneqq \limsup_{n\to \infty }\frac1n\log Z_{n}(t)\,.
\]
Here, $|I|$ denotes the length of the interval~$I\in E_n$.
For the connection of~$Z_n$ and $P$ to the topological pressure of the Farey map on~$[0,1]$ (for a definition see Section~\ref{sec:mainresults}, case $q=3$) with geometric potential, the connection to the multifractal Lyapunov spectrum, and a detailed study of the phase transition we refer to~\cite{MR2069368,MR2227090, MR2338129}. From \cite{MR1966324, MR2338129} it follows that $P$ has a phase transition of the second kind in $1$, i.e., the pressure function $P$ is differentiable in $1$ with derivative $0$, but the second derivative of $P$ has a discontinuity there. From the multifractal viewpoint this means that  with respect to the Farey map there exists no invariant \emph{finite} Gibbs measure with Hausdorff dimension $1$ and Lyapunov exponent $0$. In fact, there exists an \emph{infinite} absolutely continuous invariant measure with density $x\mapsto 1/x$. By this observation we enter the domain of infinite ergodic theory, which will also be central to this paper.
To better understand the nature of the phase transition of~$P$ in~$1$, Kleban and Fiala in \cite{MR2096044} posed the question of the asymptotic behavior of~$Z_{n}(1)$ for $n\to\infty$. It turned out that the complete answer to this question lies at the center of infinite ergodic theory \cite{MR2900554,MR3052943,MR3585883,MR3459025}, which led to (equi-)distribution results that are also the subject of this paper and which are discussed in more detail in Section~\ref{sec:mainresults}.

As already indicated above,  the Stern--Brocot sequence is intimately related to the Farey map and the geodesic flow on the modular surface~$\PSL_2(\Z)\backslash\HH$. Here, $\HH$ refers to the hyperbolic plane. This relation between the \textit{a priori} arithmetic Stern--Brocot sequence and the dynamics of the geodesic flow triggers the question as to which degree results on the Stern--Brocot sequence, such as the distribution results alluded to above, are intrinsically of arithmetic nature (i.e., are inherent to the arithmeticity of~$\PSL_2(\Z)$) or generalize to non-arithmetic but somewhat similar settings.

With this article we contribute to this question in the following way: For the family of cofinite Hecke triangle groups~$\Gamma_q$ with odd angle parameter~$q$ (see Sections~\ref{sec:mainresults} and~\ref{sec:fundpropFarey} for details), we consider generalizations of the Farey map and the Stern--Brocot sequence that arise from a discretization of the geodesic flow on the Hecke triangle orbisurface~$\Gamma_q\backslash\HH$. As $\Gamma_3 = \PSL_2(\Z)$, and $\Gamma_q$ is non-arithmetic for $q\geq 5$, we have here a one-parameter sequence of Fuchsian groups that proceeds immediately from our initial, arithmetic setting of~$\PSL_2(\Z)$ into a non-arithmetic regime while remaining close to each other from a dynamical point of view. We study certain (equi-)distribution results of the generalized Stern--Brocot sequences and further generalizations as presented in Section~\ref{sec:mainresults} below and observe that our findings generalize the corresponding distribution results for the classical, arithmetic Stern--Brocot sequence (and its generalizations within the modular group~$\Gamma_3 = \PSL_2(\Z)$) without any limitations. Thus, for this type of results, the arithmeticity of~$\PSL_2(\Z)$ is not relevant; these are purely dynamical results.

\subsection{Main results}\label{sec:mainresults}

The Hecke triangle group~$\Gamma_q$ with angle parameter~$q\in\N_{\geq 3}$, or cusp width $\cw_q \coloneqq 2\cos(\pi/q)$, is the subgroup of~$\PSL_2(\R)$ generated by the two elements
\[
 \ME_q \coloneqq \bmat{1}{\cw_q}{0}{1}\quad\text{and}\quad S \coloneqq \bmat{0}{1}{-1}{0}\,.
\]
Here, and throughout this article, we denote an element in~$\PSL_2(\R)$ (or, more generally, in~$\PGL_2(\R)$) by a representing matrix in~$\SL_2(\R)$ (more generally, in~$\GL_2(\R)$) but with square brackets. From now on, we restrict the considerations to odd values of~$q$.

In order to state the generalized Farey map~$\FM_q$ associated with~$\Gamma_q$ in explicit terms, we set
\[
 U_q \coloneqq \ME_q S = \bmat{\cw_q}{-1}{1}{0}
\]
and, for any $k\in\Z$, 
\begin{equation}\label{eq:gk}
g_{q,k} \coloneqq \left(U_q^kS\right)^{-1} = \bmat{ s(k) }{ -s(k+1) }{ -s(k-1) }{ s(k) }
\end{equation}
with
\begin{equation*}
s(x) \coloneqq \frac{\sin\left(\frac{x}{q}\pi\right)}{\sin\left(\frac{\pi}q\right)}\,.
\end{equation*}
Since $U_q^q=\id$, the sequence $(g_{q,k})_{k\in\Z}$ is indeed periodic (with minimal period~$q$). Thus $g_{q,k+q} = g_{q,k}$ for all~$k\in\Z$. The element
\[
 Q \coloneqq \bmat{0}{1}{1}{0}
\]
of~$\PGL_2(\R)$ constitutes an outer symmetry of~$\Gamma_q$, i.e., $Q\Gamma_qQ=\Gamma_q$. In particular, $Qg_{q,k} = g_{q,q-k}Q$ for all~$k\in\Z$. The \emph{generalized Farey map}~$\FM_q$ is the selfmap on~$[0,1]$ that is piecewise given by the bijections
\begin{align}
\label{eq:FM1}
[g_{q,k}^{-1}.0, g_{q,k}^{-1}.1] &\to [0,1]\,,\quad x\mapsto g_{q,k}.x\,,
\shortintertext{and}
\label{eq:FM2}
[(Qg_{q,k})^{-1}.1, (Qg_{q,k})^{-1}.0]&\to [0,1]\,,\quad x\mapsto Qg_{q,k}.x\,,
\end{align}
with $k\in\{(q+1)/2,\ldots, q-1\}$. Here, the action of $\Gamma_q$ and $Q$ on~$P^1(\R) = \R\cup\{\infty\}$ is given by fractional linear transformation. Thus
\[
 \bmat{a}{b}{c}{d}.x = \frac{ax+b}{cx+d}
\]
with the mnemonic $1/0=\infty$ (i.e., continuous extension to $\infty$ and potential singularities), for any $x\in P^1(\R)$ and any element $\textbmat{a}{b}{c}{d}$ in the group generated by $\Gamma_q$ and $Q$.
As 
\begin{align*}
 (Qg_{q,k+1})^{-1}.0 & = g_{q,k+1}^{-1}.\infty = g_{q,k}^{-1}.0
 \intertext{and}
 (Qg_{q,k})^{-1}.1 & = g_{q,k}^{-1}.1
\end{align*}
for any $k\in\Z$, the map~$\FM_q$ is indeed well-defined. 
We note that $0$ is a fixed point of~$\FM_q$. 
We refer to Figure~\ref{fig:FM5} for an indication of the graph of~$\FM_q$. The \emph{generalized Stern--Brocot sequence}~$(\mc S_{n,q})_{n\in\N_0}$ is then defined by
\[
\mc S_{n,q}\coloneqq \wt{\mc S}_{n,q}\setminus \mc S_{n-1,q} \qquad\text{for $n\in\N_0$}\,,
\]
where
\[
\wt{\mc S}_{n,q} \coloneqq \FM_q^{-n}(0) \cup \FM_q^{-n}(1)\,,\quad \mc S_{-1,q} \coloneqq \emptyset\,.
\]
In Figure~\ref{fig:SB5}, an indication of the location of the first few elements for the generalized Stern--Brocot sequence for $q=5$ is shown.
For each odd $q\geq 3$, the union of Stern--Brocot elements is $\Gamma_q.\infty\cap[0,1]$. We have $\Gamma_3.\infty\cap[0,1]=\Q\cap[0,1]$, and $\Gamma_5.\infty\cap[0,1]=\Q(\sqrt{5})\cap[0,1]$. For some more values of $q$ this set is known explicitly, see \cite[Satz~2]{leutbecher_ueber_1974}.
In Theorems~\ref{thm:jac_weight_lim_intro} and~\ref{thm:weighted_distr} we will consider even more general sequences than Stern--Brocot; the Stern--Brocot sequence is related to the case $x=1$ in Theorem~\ref{thm:jac_weight_lim_intro} and $v/w=1$ in Theorem~\ref{thm:weighted_distr}.

\begin{figure}
	\centering
	 
\pgfmathsetmacro{\mylambda}{{1/2*(1+sqrt(5))}} 

\begin{tikzpicture}
	\pgfplotsset{width=.6\textwidth}
	\tikzmath{
		\c1 = {1/(\mylambda+1)};
		\c2 = {1/\mylambda};
		\c3 = {\mylambda/2};
	}
	\begin{axis}[xtick align=center, xmajorgrids, enlargelimits=false,
		ymin=0, ymax=1, ytick={0,1},
		xmin=0, xmax=1, xtick={0,\c1,\c2,\c3,1}, xtick pos = lower, xticklabels={0,$\frac{1}{\lambda+1}$,$\frac{1}{\lambda}$,$\frac{\lambda}{2}$,1},
		domain=0:1, samples=101
		]
		\addplot[domain=0:\c1] {x/(-\mylambda*x + 1)};
		\addplot[domain=\c1:\c2] {(1-\mylambda*x)/x};
		\addplot[domain=\c2:\c3] {(\mylambda*x - 1)/(-\mylambda*x + \mylambda)};
		\addplot[domain=\c3:1] {(-\mylambda*x+\mylambda)/(\mylambda*x - 1)};
	\end{axis}
\end{tikzpicture}
	\caption{Graph of the generalized Farey map associated to $q=5$}
	\label{fig:FM5}
\end{figure}
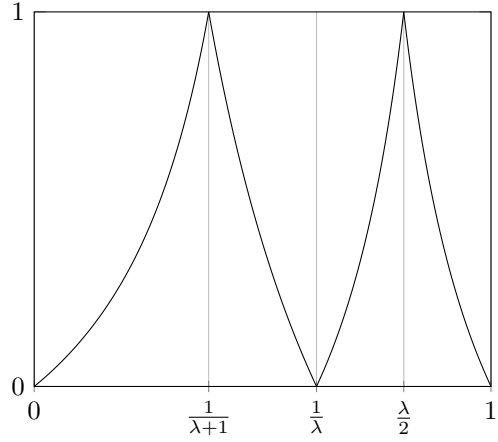

\begin{figure}
	\centering
	\begin{tikzpicture}[xscale=9, yscale=1, thick]
	\draw (0,0) node[left, xshift=-0.5cm]{$n=1$} -- (1,0);
	
	\def\tl{-0.2} 
	\def\dist{-1} 
	
	\draw (0,0) -- (0,\tl) node[below]{$0$};
	\draw (0.382,0) -- (0.382,\tl) node[below]{$\frac{1}{\lambda+1}$};
	\draw (0.618,0) -- (0.618,\tl) node[below]{$\frac{1}{\lambda}$};
	\draw (0.809,0) -- (0.809,\tl) node[below]{$\frac{\lambda}{2}$};
	\draw (1,0) -- (1,\tl) node[below]{$1$};

	
	\draw (0,\dist) node[left, xshift=-0.5cm]{$n=2$} -- (1,\dist);
	
	\draw (0,\dist) -- (0,\dist+\tl) node[below]{$0$};
	\draw ({0.236},\dist) -- ({0.236},\dist+\tl) node[below]{$\frac{1}{2\lambda +1}$};
	\foreach \x in {0.309, 0.35, 0.412, 0.447}{
		\draw (\x,\dist) -- (\x,\dist+\tl);
	}
	\draw ({0.5},\dist) -- ({0.5},\dist+\tl) node[below]{$\frac{1}{2}$};
	
	\draw ({0.723},\dist) -- ({0.723},\dist+\tl) node[below]{$\frac{2\lambda+1}{3\lambda+1}$};
	\foreach \x in {0.764, 0.789, 0.829, 0.854}{
		\draw (\x,\dist) -- (\x,\dist+\tl);
	}
	\draw ({0.894},\dist) -- ({0.894},\dist+\tl) node[below]{$\frac{2\lambda+2}{3\lambda+1}$};
	
	\draw (1,\dist) -- (1,\dist+\tl) node[below]{$1$};
	
	
	\draw (0,2*\dist) node[left, xshift=-0.5cm]{$n=3$} -- (1,2*\dist);
	
	\draw (0,2*\dist) -- (0,2*\dist+\tl) node[below]{$0$};
	\foreach \x in {0.171, 0.206, 0.224, 0.247, 0.259, 0.276, 0.333, 0.342, 0.347, 0.354, 0.359, 0.365}{
		\draw (\x,2*\dist) -- (\x,2*\dist+\tl);
	}
	
	\foreach \x in {0.398, 0.405, 0.409, 0.415, 0.42, 0.427, 0.472, 0.484, 0.493, 0.508, 0.519, 0.539}{
		\draw (\x,2*\dist) -- (\x,2*\dist+\tl);
	}
	
	\foreach \x in {0.691, 0.708, 0.717, 0.729, 0.736, 0.745, 0.778, 0.783, 0.786, 0.791, 0.794, 0.798}{
		\draw (\x,2*\dist) -- (\x,2*\dist+\tl);
	}
	
	\foreach \x in {0.82, 0.824, 0.827, 0.832, 0.835, 0.84, 0.873, 0.882, 0.889, 0.901, 0.91, 0.927}{
		\draw (\x,2*\dist) -- (\x,2*\dist+\tl);
	}
	
	\draw (1,2*\dist) -- (1,2*\dist+\tl) node[below]{$1$};
	
\end{tikzpicture}
	\caption{First elements of generalized Stern--Brocot sequence $(\mc S_{n,5})$, omitting $\mc S_{0,5} = \{0,1\}$}
	\label{fig:SB5}
\end{figure}
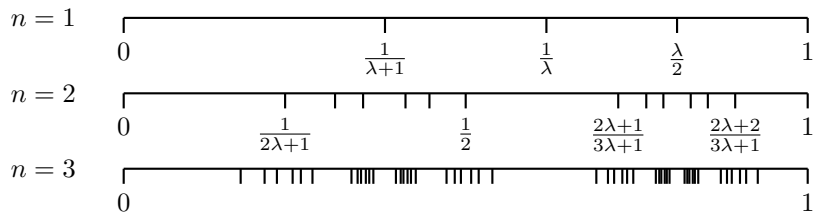

Our first main result concerns the shrinking properties of compact subsets of~$[0,1]$ that are bounded away from the fixed point~$0$ under backwards propagation via~$\FM_q$ with respect to the Lebesgue measure~$\leb$ on~$[0,1]$.
We endow $[0,1]$ with its Borel $\sigma$-algebra~$\mc B_{[0,1]}$, and we denote by $\starlim$ the weak-star limit of  measures on~$[0,1]$, i.e., a sequence of measures~$(\nu_n)_n$ on $[0,1]$ (or $\mc B_{[0,1]}$) converges to the measure~$\nu$ on~$[0,1]$ if and only if for all continuous functions $f$ on~$[0,1]$ we have
\[
 \nu_n(f) \to \nu(f)\,.
\]
We note that weak-star convergence of measures is sometimes called ``weak convergence of measures,'' omitting ``star.'' We further note that each continuous function on~$[0,1]$ is of course bounded due to the compactness of~$[0,1]$.

\begin{thm}\label{thm:dyn_intervals}
Let $q\geq 5$ odd. For all~$0<\alpha\leq\beta\leq 1$  we have
\[
 \starlim_{n\to\infty} \left( \log(n) \cdot \leb\vert_{\FM_q^{-n}\bigl( [\alpha,\beta] \bigr)}  \right) = \log\left({\beta}/{\alpha}\right) \leb\,.
\]
\end{thm}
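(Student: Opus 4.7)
The plan is to dualize the statement via the Perron--Frobenius operator of~$\FM_q$ and then invoke a Thaler-type asymptotic that identifies the leading behavior of $\mc L^n$ on compacta away from the indifferent fixed point~$0$. Concretely, for any continuous test function $f\colon[0,1]\to\R$, let $\mc L=\mc L_q$ denote the transfer operator of $\FM_q$ with respect to~$\leb$. The defining duality $\int(g\circ\FM_q)h\,d\leb=\int g\cdot\mc L h\,d\leb$ gives
\[
\int f\,d\bigl(\leb\vert_{\FM_q^{-n}([\alpha,\beta])}\bigr) = \int_0^1 f\cdot(\one_{[\alpha,\beta]}\circ\FM_q^n)\,d\leb = \int_\alpha^\beta \mc L^n f\,d\leb\,,
\]
so the theorem reduces to proving $\lim_{n\to\infty}\log(n)\int_\alpha^\beta \mc L^n f\,d\leb = \log(\beta/\alpha)\int_0^1 f\,d\leb$ for every such~$f$.

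Next I would apply Thaler's pointwise dual ergodic theorem for $\FM_q$. The branch of $\FM_q$ whose domain contains~$0$ is, by~\eqref{eq:FM1} with $k=q-1$, given by $g_{q,q-1}(x)=x/(1-\cw_q x)=x+\cw_q x^2+O(x^3)$; hence~$0$ is an indifferent fixed point with quadratic tangency, placing $\FM_q$ in the AFN-framework. In this setting the wandering rate is asymptotic to~$\log n$, and the $\sigma$-finite infinite $\FM_q$-invariant measure absolutely continuous with respect to~$\leb$ has density $h_q(x)=1/x$ on $(0,1]$ (with the normalization corresponding to the factor $\log(n)$ in the statement). Thaler's asymptotic, combined with this identification of $h_q$, then reads
\[
\log(n)\cdot\mc L^n f(x)\xrightarrow[n\to\infty]{}h_q(x)\int_0^1 f\,d\leb\qquad\text{uniformly on compact subsets of }(0,1]\,.
\]

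Since $[\alpha,\beta]\subset(0,1]$ is compact and bounded away from the neutral fixed point, the uniform convergence lets me interchange limit and integration, giving
\[
\lim_{n\to\infty}\log(n)\int_\alpha^\beta\mc L^n f\,d\leb = \int_\alpha^\beta h_q\,d\leb\cdot\int_0^1 f\,d\leb = \log(\beta/\alpha)\int_0^1 f\,d\leb\,,
\]
which is exactly the asserted weak-star convergence of measures.

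The substantive step is the Thaler-type uniform convergence together with the pinning down of the density as $h_q(x)=1/x$ on $(0,1]$, which is what makes the constant in front of $\log(\beta/\alpha)$ come out to exactly~$1$. This amounts to showing that $\FM_q$ is a conservative, ergodic, pointwise dual ergodic AFN-map with the expected $\log n$ wandering-rate asymptotics, and will be where the bulk of the technical work lies; the analysis proceeds from the indifferent branch $g_{q,q-1}$ and the uniform expansion of the remaining Möbius branches in~\eqref{eq:FM1}--\eqref{eq:FM2}. Once these ingredients are established in the preceding sections, the proof of Theorem~\ref{thm:dyn_intervals} reduces to the short duality-plus-uniform-convergence argument outlined above, tested against an arbitrary continuous~$f$.
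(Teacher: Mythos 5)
Your proposal is correct and takes essentially the same route as the paper: both reduce the claim by transfer-operator duality to the uniform-on-compacta asymptotic $\log(n)\,\mathcal{L}^n f \to h\int_X f\,\dd\leb$ with $h(x)=1/x$, justified by verifying that $\FM_q$ for odd $q\ge 5$ is a topologically mixing AFN-map, conservative and ergodic, with infinite invariant density $1/x$ and wandering rate asymptotic to $\log n$, and then integrate over $[\alpha,\beta]$ to produce the factor $\log(\beta/\alpha)$. The only cosmetic difference is that you state the key input as a Thaler-type theorem for the Perron--Frobenius operator with respect to~$\leb$, while the paper invokes the Melbourne--Terhesiu result (Theorem~\ref{thm:MT}) for the transfer operator $\wh\FM$ with respect to~$\mu$ --- equivalent via $\wh\FM(f)=\PF(fh)h^{-1}$ --- and passes the limit through the integral by dominated convergence rather than by your direct uniform-convergence argument.
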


An immediate consequence of Theorem~\ref{thm:dyn_intervals} is the observation that for $\alpha$ and $\beta$ as above,
\begin{equation}
\label{eq:shrinkrate}
 \leb\bigl( \FM_q^{-n}([\alpha,\beta]) \bigr) \sim \frac{\log\left({\beta}/{\alpha}\right)}{\log(n)}  \quad\text{as $n\to\infty$}\,,
\end{equation}
which provides a precise shrinking rate of the Lebesgue volume along the sequence~$\bigl(\FM_q^{-n}([\alpha,\beta])\bigr)_n$. It shows that this rate is structurally the same for all such intervals~$[\alpha,\beta]$ with positive length (i.e., $\alpha\not=\beta$), but its scale depends logarithmically on~$\beta/\alpha$. For singletons, i.e., if $\alpha=\beta$, the expressions on both sides of~\eqref{eq:shrinkrate} are identically vanishing.

Our proof technique for Theorem~\ref{thm:dyn_intervals} does not apply to the case $q=3$ as the classical Farey map~$\FM_3$ is not an AFN-map. See Section~\ref{sec:AFN} for further details. For $q=3$, Theorem~\ref{thm:dyn_intervals} is established in~\cite{MR3052943} with a different proof, which however does not apply to $q>3$. Nevertheless, as Theorem~\ref{thm:dyn_intervals} is valid for all odd $q\geq 3$, we will use it in this generality to establish some of its consequences.

Theorem~\ref{thm:dyn_intervals} (for any odd $q\geq 3$) is crucial for the two results stated in Theorems~\ref{thm:jac_weight_lim_intro} and~\ref{thm:weighted_distr} below, both of which can be understood as a equidistribution result for \emph{weighted Dirac combs}.
For these results, we set
\begin{equation}\label{eq:Lambda}
\Lambda_q \coloneqq \left\{ g_{q,k}^{-1}, (Qg_{q,k})^{-1} : k \in \{(q+1)/2,\ldots, q-1\} \right\}\,,
\end{equation}
which collects the inverses of the elements acting in the branches of the generalized Farey map~$\FM_q$ (see~\eqref{eq:FM1}-\eqref{eq:FM2}).
The semigroup in~$\PGL_2(\R)$ generated by $\Lambda_q$ is free (as we will discuss in Section~\ref{sec:origin}),
and hence every element in this semigroup can be understood as a unique word over the alphabet~$\Lambda_q$.
For $n\in\N$, we let $W_{q,n}$ denote the set of words of length~$n$ over~$\Lambda_q$.

\begin{thm}\label{thm:jac_weight_lim_intro}
Let $q\geq 3$ odd. For each $x\in (0,1]$ we have
\[
 \starlim_{n\to\infty}\, x \log(n) \sum_{h\in W_{q,n}} |h'(x)|\delta_{h.x} = m\,.
\]
\end{thm}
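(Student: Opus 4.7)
The plan is to identify the weighted Dirac comb as the iterated transfer operator of the generalized Farey map $\FM_q$ and then to apply a pointwise dual ergodic theorem of Thaler type for AFN-maps.

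First, since the elements of $\Lambda_q$ are precisely the inverses of the branches of $\FM_q$ (see \eqref{eq:FM1}--\eqref{eq:FM2} and \eqref{eq:Lambda}), the set $W_{q,n}$ parametrizes the inverse branches of $\FM_q^n$. If $\TO_q$ denotes the transfer operator of $\FM_q$ with respect to Lebesgue measure $\leb$, then for every $f\in C([0,1])$ and every $x\in [0,1]$,
\[
(\TO_q^n f)(x) = \sum_{h\in W_{q,n}} |h'(x)|\, f(h.x).
\]
Testing the measure in the theorem against $f$ reduces the claim to the pointwise convergence
\[
x\log(n)\,(\TO_q^n f)(x) \longrightarrow \int_0^1 f\, d\leb, \qquad x\in (0,1],\ f\in C([0,1]).
\]

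Next, I would invoke the dual ergodic theorem for AFN-maps. The map $\FM_q$ is an AFN-map with unique indifferent fixed point at $0$, which follows from its explicit branch structure and is developed in the paper's analysis of $\FM_q$. Its infinite $\sigma$-finite absolutely continuous $\FM_q$-invariant measure has density $h_q(x)=1/x$ with respect to $\leb$. Thaler's uniform pointwise dual ergodic theorem for AFN-maps then supplies a return sequence $(w_n)_n$ such that, for every $f\in C([0,1])$,
\[
\frac{1}{w_n}(\TO_q^n f)(x) \longrightarrow h_q(x)\int_0^1 f\, d\leb
\]
uniformly on compact subsets of $(0,1]$.

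To pin down the normalization $w_n \sim \log(n)$, I would test this convergence with the constant function $f\equiv 1$ and integrate over $[\alpha,\beta]\subset (0,1]$. Since $\int_\alpha^\beta (\TO_q^n \mathbf{1})(x)\, dx = \leb(\FM_q^{-n}([\alpha,\beta]))$ by the defining duality of the transfer operator, and $\int_\alpha^\beta h_q\, d\leb = \log(\beta/\alpha)$, comparison with Theorem~\ref{thm:dyn_intervals} forces $w_n/\log(n)\to 1$. Substituting $\log(n)$ for $w_n$ in the displayed convergence and multiplying both sides by $x$ then yields the theorem. The main obstacle is verifying the AFN structure with \emph{exactly} the density $h_q(x)=1/x$ rather than a multiple of it, together with the applicability of Thaler's theorem in the form required; Theorem~\ref{thm:dyn_intervals} is designed to encode this normalization simultaneously for the density and the wandering rate, so once the AFN framework is in place, the pointwise dual ergodic theorem does the rest.
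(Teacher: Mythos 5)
Your proposal has a genuine gap at $q=3$, which is included in the statement. The entire argument hinges on $\FM_q$ being an AFN-map (so that an AFN-type uniform convergence theorem for iterates of the transfer operator applies), and you assert this is ``developed in the paper's analysis'' for all odd $q$. It is not: $\cw_3=1$ and $|\FM_3'(1)|=1$, so $\FM_3$ violates the expansion condition (N) and is \emph{not} an AFN-map (see the remark after Proposition~\ref{prop:FareyAFN}); consequently Theorem~\ref{thm:MT} and any AFN-based Thaler-type theorem are unavailable for $q=3$, and your proof only covers $q\geq 5$. This is precisely why the paper does not argue as you do: its proof of Theorem~\ref{thm:jac_weight_lim_intro} uses \emph{only} Theorem~\ref{thm:dyn_intervals} (whose $q=3$ case is imported from \cite{MR3052943}) together with an elementary comparison of the distribution functions of the Dirac comb $\varrho_n$ and of the normalized measures $\log(n)\,\mu(V_\eps)^{-1}\leb\vert_{\FM^{-n}(V_\eps)}$, so that $q=3$ is covered; it then handles $x=1$ by a separate one-step decomposition $W_{n+1}=W_n\Lambda$ combined with $\PF h=h$. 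To rescue your route at $q=3$ you would have to invoke Thaler's uniform convergence theorem for his original (non-AFN) class of interval maps, which contains the classical Farey map, but that is neither what you cite nor what the paper sets up.

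For $q\geq 5$ your approach is sound and genuinely different from (and shorter than) the paper's. Indeed, $x\log(n)\sum_{h\in W_{q,n}}|h'(x)|f(h.x)=x\log(n)\,\PF^n f(x)=\log(n)\,\wh\FM^n(f/h)(x)$ by Proposition~\ref{prop:invmeasure}, and Theorem~\ref{thm:MT} applied with $u=f$ continuous (hence Riemann integrable) gives convergence to $\int_X f\dd\leb$ uniformly on compact subsets of $X'=(0,1]$; the normalization $w(n)\sim\log n$ is supplied by Propositions~\ref{prop:tailprob} and~\ref{prop:tailsum} (or, as you propose, by matching against Theorem~\ref{thm:dyn_intervals}, which is legitimate since that theorem is proved independently). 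This even removes the need for the paper's separate argument at $x=1$, since $1\in X'$. Two precision points: what you need is not a pointwise dual ergodic theorem (which concerns Ces\`aro averages of $\wh\FM^k$) but the stronger convergence of the individual iterates $w(n)\wh\FM^n$, i.e.\ Thaler's uniform convergence theorem or \cite{melbourne_operator_2012}, and its hypotheses (regularly varying tails of index $1$, the explicit invariant density $1/x$) must be verified as in Sections~\ref{sec:measure} and~\ref{sec:tailprob}, not merely assumed.
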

We note that the statement of Theorem~\ref{thm:jac_weight_lim_intro} obviously does not extend to $x=0$. 

Recall that  $\cw_q \coloneqq 2\cos(\pi/q)$ is the cusp width of~$\Gamma_q$.
We call a pair $(a,c)\in \Z[\cw_q]\times\Z[\cw_q]$ a \emph{reduced fraction} if there exists an element in~$\Gamma_q$ of the form
\[
\bmat{a}{*}{c}{*}\,.
\]
Clearly, if $(a,c)$ is a reduced fraction, then also $(-a,-c)$ is a reduced fraction. 
For this reason, we identify $(a,c)$ and $(-a,-c)$ and, by slight abuse of notion, continue to denote the equivalence class $[(a,c)] = \{(a,c), (-a,-c)\}$ by $(a,c)$ and also continue to call it a reduced fraction. 
We further identify the (equivalence class of the) reduced fraction~$(a,c)$ with the value $a/c \in \R\cup\{\infty\}$. 
Clearly, each point in $\Gamma_q.\infty$ can be represented by a reduced fraction. 
The representing reduced fraction is indeed unique, which is another justification for descending from the original pairs in $\Z[\cw_q]^2$ to the equivalence classes (see Section~\ref{sec:proofweight} for details). 
For $q=3$, this notion of reduced fraction is the classical notion of reduced fractions from integer arithmetic.
For any $q\geq 3$, $n\in \N$  and  reduced fraction $v/w\in \Gamma_q.\infty \cap [0,1]$, we let
\[
 \RF_{q,n}(v,w) \coloneqq \left\{ \text{$(r,s)$ reduced fraction} :  \frac{r}{s}\in \FM_q^{-n}\left(\frac{v}{w}\right) \right\}
\]
denote the preimages of $v/w$ under $\FM_q^n$ in reduced fraction representation representing \emph{cusp points of level $n$} seen from $v/w$.
We emphasize that the level is indeed uniquely determined as soon as $v/w\not=0$.
A rather immediate consequence of Theorem~\ref{thm:jac_weight_lim_intro} is the following equidistribution result for weighted Dirac combs supported on cusp points of increasing level.

\begin{thm}\label{thm:weighted_distr}
Let $q\geq 3$ odd. For each reduced fraction $(v,w)\in\Gamma_q.\infty \cap (0,1]$ we have
\[
 \starlim_{n\to\infty} c_{v/w}\,vw\,\log(n) \sum_{ (r,s)\in \RF_{q,n}(v,w) } \frac1{s^2}\,\delta_{r/s} = m\,,
\]
where $c_1\coloneqq 2$ and $c_x\coloneqq 1$ for $x\not=1$.
\end{thm}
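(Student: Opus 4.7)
The plan is to rewrite the weighted Dirac comb in the theorem as a constant multiple of the one appearing in Theorem~\ref{thm:jac_weight_lim_intro} evaluated at $x=v/w$, and then invoke that theorem. The argument rests on two ingredients: the multiplicity of the map $\Phi_n\colon W_{q,n}\to\FM_q^{-n}(v/w)$, $h\mapsto h.(v/w)$, and the identification of the reduced denominator of $h.(v/w)$ in terms of $|h'(v/w)|$.

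For the multiplicity I would argue as follows. Each $h=h_1\cdots h_n\in W_{q,n}$ acts on $[0,1]$ as an inverse branch of $\FM_q^n$, mapping $[0,1]$ bijectively onto a cell $J_h$; the cells have pairwise disjoint interiors. Iterating $\FM_q$ and using $\FM_q\circ h_i=\id$ on $J_{h_i}$, a coincidence $h.(v/w)=h'.(v/w)$ is equivalent to $h_{j+1}\cdots h_n.(v/w)=h'_{j+1}\cdots h'_n.(v/w)$ for every $j$. The last-letter condition $h_n.(v/w)=h'_n.(v/w)$ with $h_n\neq h'_n$ forces the common value to be a shared boundary point of $J_{h_n}$ and $J_{h'_n}$, and by the relations $(Qg_{q,k})^{-1}.1=g_{q,k}^{-1}.1$ and $(Qg_{q,k+1})^{-1}.0=g_{q,k}^{-1}.0$ recorded in the introduction, this forces $v/w\in\{0,1\}$. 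Since $v/w\in(0,1]$, the only possible coincidence is the last-letter pairing $\{g_{q,k}^{-1},(Qg_{q,k})^{-1}\}$ at $v/w=1$, in which case the common value $y=g_{q,k}^{-1}.1$ lies in $(0,1)$ and in particular is not $1$ (because $\FM_q(1)=0$, so $1$ is not a preimage of $1$ under any iterate). Iterating the argument at $y$, now in the coincidence-free regime, uniquely determines $h_1,\dots,h_{n-1}$, so $\Phi_n$ has fibre size exactly $c_{v/w}$.

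For the denominators, write $h=\textbmat{a}{b}{c}{d}$; then $h.(v/w)=(av+bw)/(cv+dw)$ and $|h'(v/w)|=w^2/(cv+dw)^2$ (since $\det h=\pm1$, depending on whether $h\in\Gamma_q$ or $h\in\Gamma_q Q$). I would show that $(av+bw,cv+dw)$ is itself a reduced-fraction representative of $h.(v/w)$; by uniqueness of the reduced representation (cf.~Section~\ref{sec:proofweight}) this yields $s^2=(cv+dw)^2$, hence $1/s^2=|h'(v/w)|/w^2$. If $h\in\Gamma_q$, composition of $h$ with a witness $\textbmat{v}{*}{w}{*}\in\Gamma_q$ for the reducedness of $(v,w)$ produces an element of $\Gamma_q$ with first column $(av+bw,cv+dw)$. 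If $h=\gamma Q$ with $\gamma\in\Gamma_q$, one uses the outer symmetry $Q\Gamma_q Q=\Gamma_q$ to conclude that $(w,v)$ is also reduced, and composition of $\gamma$ with a witness $\textbmat{w}{*}{v}{*}\in\Gamma_q$ performs the same task.

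Combining the two steps yields
\[
\sum_{(r,s)\in\RF_{q,n}(v,w)} \frac{1}{s^2}\,\delta_{r/s}=\frac{1}{c_{v/w}\,w^2}\sum_{h\in W_{q,n}} |h'(v/w)|\,\delta_{h.(v/w)}.
\]
Multiplying both sides by $c_{v/w}\,vw\log(n)$ turns the right-hand side into $(v/w)\log(n)\sum_{h\in W_{q,n}}|h'(v/w)|\,\delta_{h.(v/w)}$, which weak-star converges to $m$ by Theorem~\ref{thm:jac_weight_lim_intro} applied at $x=v/w\in(0,1]$. Theorem~\ref{thm:weighted_distr} follows. The main obstacle is the denominator identification in the $\Gamma_q Q$-case, which hinges on the outer symmetry exchanging $(v,w)$ with $(w,v)$ at the level of reduced fractions; once that is in place, the multiplicity bookkeeping is routine and the appeal to Theorem~\ref{thm:jac_weight_lim_intro} is immediate.
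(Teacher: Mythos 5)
Your proposal is correct and follows the same overall route as the paper: reduce the statement to Theorem~\ref{thm:jac_weight_lim_intro} at $x=v/w$ via a $c_{v/w}$-to-one correspondence between words in $W_{q,n}$ and reduced-fraction preimages, together with the identity $|h'(v/w)|=w^2/s^2$. The differences lie in how the two ingredients are verified. For the multiplicity you argue dynamically (the cells of the length-$n$ inverse branches have pairwise disjoint interiors, so a coincidence of last letters forces the common value to be a shared cell endpoint, hence $v/w\in\{0,1\}$, and then an induction at an interior point settles the remaining letters), whereas the paper's Lemma~\ref{lem:WnTx} argues algebraically from the matrix entries; your version is, if anything, more transparent for the injectivity step. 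For the reducedness of $(av+bw,cv+dw)$ when $\det h=-1$, the paper uses Lemma~\ref{lem:Qredfrac} (a column of any element of $\langle\Gamma_q,Q\rangle$ is reduced, proved via $gQS\in\Gamma_q$), while you pass through the claim that $(w,v)$ is reduced whenever $(v,w)$ is; that claim is true, but conjugating a witness $g$ by $Q$ alone produces an element of $\Gamma_q$ whose first column is $(d,b)$ rather than $(w,v)$, so you still need the extra twist by $S$ (for instance $(QgQ)S$ has first column $\pm(w,v)$), which is exactly the computation behind the paper's Lemma~\ref{lem:Qredfrac}; your step is therefore that lemma in disguise rather than a shortcut. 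Finally, you cite the uniqueness of the reduced representative instead of proving it; the paper establishes it in Section~\ref{sec:proofweight} via the cyclicity of $\Stab_{\Gamma_q}(\infty)$, so for a self-contained argument that piece should be supplied or explicitly referenced.
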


We emphasize that the values $vw$ and $s^2$ in Theorem~\ref{thm:weighted_distr} are independent of the choice of the representing pair in $\Z[\cw_q]^2$ for the respective reduced fraction and hence are indeed well defined.

As Theorem~\ref{thm:jac_weight_lim_intro}, Theorem~\ref{thm:weighted_distr} obviously does not extend to~$0$. Further,  \cite[Theorem 1.2]{MR3052943} is covered by  Theorem~\ref{thm:weighted_distr} as the special case with $q=3$.

\subsection{Key elements of proofs, and structure of article}

Central for the proofs of Theorems~\ref{thm:dyn_intervals}--\ref{thm:weighted_distr} are methods from infinite ergodic theory and transfer operator techniques. For Theorem~\ref{thm:dyn_intervals} we will take advantage of a result by Melbourne and Terhesiu~\cite{melbourne_operator_2012} on the asymptotic behavior of iterates of the transfer operator of a topologically mixing AFN-map. We restate this result in a specialized version as Theorem~\ref{thm:MT} in Section~\ref{sec:fundpropFarey}. The main bulk of work for establishing Theorem~\ref{thm:dyn_intervals} is therefore to show that the generalized Farey maps are eligible for this result and to determine all necessary entities in quite some detail. For that reason we prove---in Section~\ref{sec:fundpropFarey}---that the generalized Farey maps~$\FM_q$ for odd $q\geq 5$ are indeed topologically mixing and  AFN-maps. At this point we observe the subtlety that the Farey map~$\FM_3$ is not an AFN-map. We further provide the Perron--Frobenius operator, the invariant measure, and the transfer operator in explicit form. Moreover, we undertake a detailed study of the tail probabilities and their summation functions. Section~\ref{sec:proofthm11} is devoted to combining the results from Section~\ref{sec:fundpropFarey} with \cite{melbourne_operator_2012} to a proof of Theorem~\ref{thm:dyn_intervals}.

The proof of Theorem~\ref{thm:jac_weight_lim_intro} makes crucial use of Theorem~\ref{thm:dyn_intervals} in combination with a detailed comparison of the weighted sums of Dirac measures in the statement of Theorem~\ref{thm:jac_weight_lim_intro} and suitably restricted and rescaled Lebesgue measures. We provide the details in Section~\ref{sec:proofweight}. Theorem~\ref{thm:weighted_distr} is a rather straightforward consequence of Theorem~\ref{thm:jac_weight_lim_intro}; a proof is discussed in Section~\ref{sec:proofweight}.

\subsection{Related results}\label{sec:motivation}

As mentioned above, the generalized Farey maps as well as the Perron--Frobenius operators arose in~\cite{Pohl_Symdyn2d, Moeller_Pohl, Pohl_spectral_hecke}. See also Section~\ref{sec:origin}. We here restrict the considerations to Hecke triangle groups with \emph{odd} angle parameter. However, we expect that similar results are valid for the Hecke triangle groups with \emph{even} angle parameter, which are discussed alongside with the odd angle parameter groups in~\cite{Pohl_spectral_hecke}. In the even angle parameter situation, the generalized Farey ``map'' is not a genuine map but a relation with weights, and the associated Perron--Frobenius operator features additional weights. See~\cite{Pohl_spectral_hecke}. The investigations in~\cite{Moeller_Pohl, Pohl_spectral_hecke} show an intimate relation between eigenfunctions with eigenvalue~$1$ of the Perron--Frobenius operator and Maass cusp forms for the Hecke triangle groups, as well as between the Perron--Frobenius operator and the Selberg zeta function, thereby also providing a generalized Gauss map. See also~\cite{Pohl_mcf_general, Adam_Pohl, FP_szf}. It would be interesting to understand if these relations can be used to improve on our results, probably in regard to higher order asymptotics as in~\cite{MR3459025}. Moreover, it would be interesting to see if our results can be generalized to Hecke triangle groups of infinite co-area or other classes of Fuchsian groups. The necessary discretizations and Perron--Frobenius operators with similar properties are already available. See, e.g., \cite{Bruggeman_Pohl, Pohl_Symdyn2d, Pohl_Wabnitz, Wabnitz}.

In \cite{MR3052943} the equidistribution results for $q=3$ (i.e., Theorems~\ref{thm:dyn_intervals} and~\ref{thm:weighted_distr} for $\PSL_2(\Z)$) were compared with the famous equidistribution results of the Farey sequence according to Franel and Landau~\cite{zbMATH02595443}. It would be interesting to consider analogous results for Hecke triangle groups for the distribution of cusp points determined by the size of the denominators of their defining reduced fractions.
We emphasize that our results are closely related to the notion of measure-theoretical mixing for dynamical systems. Indeed, the following consequence of Theorem~\ref{thm:MT} below applied to our situation lies at the heart of the proof of our main theorem and reveals the following  mixing property: For all intervals $U,V\subset (0,1]$ that are bounded away from $0$,
\[
\log(n)\,\mu\bigl( \FM^{-n}(U)\cap V \bigr) \stackrel{n\to\infty}{\longrightarrow} \mu(U)\mu(V)\,.
\]
With Theorem~\ref{thm:dyn_intervals} we obtain
\[
\log(n)\,\leb\bigl( \FM^{-n}(U)\cap V \bigr) \stackrel{n\to\infty}{\longrightarrow} \mu(U)\leb(V)\,.
\]
For the first situation, Aaronson~\cite{MR0584018} introduced the concept of \emph{ratio mixing} for this type of mixing behavior in the context of infinite ergodic theory. See also~\cite{MR2303796, MR3345166, MR3867232}.
Moreover, the same discretizations giving rise to the generalized Farey maps also give rise to reduction theories for indefinite quadratic forms associated to Hecke triangle groups~\cite{Pohl_Spratte}. An investigation of potential relations between our equidistribution results and properties of these reduction theories are of interest.

\begin{ack}
The research of LB and AP is funded by the Deutsche Forschungsgemeinschaft (DFG, German Research Foundation) -- project no.~441868048 (Priority Program~2026 ``Geometry at Infinity'').
\end{ack}

\section{Fundamental properties of the generalized Farey maps}
\label{sec:fundpropFarey}

In this section we will establish several fundamental properties of the generalized Farey map and of related objects such as transfer operators and tail probabilities, which we will need for the proof of Theorem~\ref{thm:dyn_intervals}. A crucial role in our proof of Theorem~\ref{thm:dyn_intervals} will be played by the following result by Melbourne and Terhesiu, which we state here initially without further explanation and refer to the discussions further below for details and definitions. Throughout let $X\coloneqq [0,1]$ denote the closed unit interval and endow it with its Borel $\sigma$-algebra, which we denote~$\mc B = \mc B_X$.

\begin{thm}[Special case of {\cite[Theorem~1.1]{melbourne_operator_2012}}]
\label{thm:MT}
Let $H\colon X\to X$ be a topologically mixing AFN-map and let $X'$ denote the complement of the indifferent fixed points of~$H$ in~$X$.
Suppose that $\mu$ is a $\sigma$-finite $H$-invariant measure that is absolutely continuous to the Lebesgue measure on~$X$, and let $h$ be the density of~$\mu$. Further suppose that $(X, \mc B, H,\mu)$ has regularly varying tail probabilities with exponent~$1$. Let $\wh H$ denote the transfer operator associated to~$H$ and~$\mu$, and let $w$ denote the tail probabilities summation function.
Then
\[
 \lim_{n\to\infty} w(n)\wh H^n(v) =  \int_X v\,d\mu
\]
uniformly on compact subsets of~$X'$ for all~$v\colon X\to\R$ of the form~$v=u/h$, where $u$ is Riemann integrable on~$X$.
\end{thm}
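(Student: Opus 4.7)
The plan is to derive Theorem~\ref{thm:MT} as an immediate specialization of \cite[Theorem~1.1]{melbourne_operator_2012}; the statement is explicitly labelled a ``special case,'' so the task is essentially one of careful translation between the two formulations rather than one of producing new mathematics.

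First, I would recall the general framework of Melbourne and Terhesiu. Their theorem applies to interval maps (more generally, Gibbs--Markov-type systems) with finitely many indifferent fixed points admitting a $\sigma$-finite invariant measure that is absolutely continuous to Lebesgue, under the assumption that the return-time (equivalently: the tail probabilities) of a suitable inducing set is regularly varying with some exponent $\alpha \in (0,1]$. For $\alpha = 1$ --- the Darling--Kac boundary regime --- the appropriate normalization is precisely the truncated first moment of the return time, which is the tail probabilities summation function $w(n)$; for this exponent no additional normalizing constant of Mittag--Leffler type appears, so the limit reads simply $\int v \,d\mu$.

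Second, I would verify that the hypotheses listed in Theorem~\ref{thm:MT} are exactly the $\alpha=1$ specialization of Melbourne--Terhesiu's assumptions: topological mixing, the AFN-structure (which is precisely the class of maps considered in \cite{melbourne_operator_2012}), the existence of a $\sigma$-finite absolutely continuous invariant measure with density $h$, and regularly varying tail probabilities of exponent one. The class of admissible observables --- functions of the form $v = u/h$ with $u$ Riemann integrable on $X$ --- likewise matches the class treated there, and the conclusion of uniform convergence on compact subsets of $X'$ (the complement of the indifferent fixed points) is identical.

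The main --- and essentially only --- obstacle is bookkeeping: one must check that the normalization conventions agree (their tail probabilities summation function is the same object as $w$, up to asymptotic equivalence), that the set $X'$ used here coincides with the complement of the neutral fixed points appearing in \cite{melbourne_operator_2012}, and that the transfer operator $\widehat H$ with respect to $\mu$ (rather than Lebesgue) is indeed the operator for which their asymptotic is stated. Once these identifications are in place, Theorem~\ref{thm:MT} follows directly; no further dynamical input is required at this stage, and the substantive work for the paper will lie in verifying these hypotheses for the generalized Farey maps $\FM_q$ in the subsequent subsections.
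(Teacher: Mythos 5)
Your proposal matches the paper exactly: the paper gives no independent proof of Theorem~\ref{thm:MT}, but states it purely as a specialization of \cite[Theorem~1.1]{melbourne_operator_2012}, with the substantive work deferred to verifying the hypotheses (topological mixing, AFN-structure, invariant density, regularly varying tails) for the maps~$\FM_q$ in the later sections. Your translation/bookkeeping reading, including the role of $w(n)$ as the normalization in the exponent-one regime and the identification of $X'$ and of the transfer operator with respect to~$\mu$, is precisely how the paper uses the cited result.
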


In the following subsections we will show that the generalized Farey map~$\FM_q$ satisfies the hypotheses of Theorem~\ref{thm:MT} for odd $q\geq 5$.
In Section~\ref{sec:proofthm11} we will combine our results with Theorem~\ref{thm:MT} to a proof of Theorem~\ref{thm:dyn_intervals}. For $q=3$, the map~$\FM_q$ is not eligible for Theorem~\ref{thm:MT} because, as we will see below, $\FM_3$ is not an AFN-map. Nevertheless we will consider throughout all odd $q\geq 3$. If there is no need to distinguish between different values for~$q$ or if the range for $q$ is clear from the context, we will omit $q$ from the subscripts to simplify the notation.

\subsection{Origin of the generalized Farey maps}
\label{sec:origin}

The generalized Farey map~$\FM$ deduces from a symbolic dynamics for the geodesic flow on the Hecke triangle orbisurface~$\Gamma\backslash\h$, which is developed in~\cite{Pohl_Symdyn2d}. Alternatively, it can be developed from a symbolic dynamics for the billiard flow on the triangle surface~$\wt\Gamma\backslash\h$, where $\wt\Gamma\coloneqq \langle \Gamma, Q\rangle$ is the triangle group underlying~$\Gamma$, i.e., the subgroup of~$\PGL_2(\R)$ generated by~$\Gamma$ and~$Q$. See~\cite{Pohl_spectral_hecke}. We provide a few more details.

For any subset $U\subseteq \R$ we set
\[
U_\st \coloneqq U\setminus\Gamma.\infty\,.
\]
Here, the subscript $\st$ refers to ``strong,'' a notion which is explained in detail in~\cite{Pohl_Symdyn2d} but is not essential for our discussions. The construction  from~\cite{Pohl_Symdyn2d} (for $\Gamma\backslash\h$ see in particular the sequence of examples therein) provides a conjugation (more precisely, an almost-conjugation; see below) of the geodesic flow on~$\Gamma\backslash\h$ to the bijective map
\[
H\colon (0,\infty)_\st\times (-\infty,0)_\st \to (0,\infty)_\st\times (-\infty,0)_\st
\]
that is determined by
\begin{align*}
(g_k^{-1}.0, g_k^{-1}.\infty)_\st\times (-\infty,0)_\st & \to (0,\infty)_\st\times (g_k.(-\infty), g_k.0)_\st\,,
\\
(x,y) &\mapsto (g_k.x,g_k.y)
\end{align*}
for $k\in\{1,\ldots, q-1\}$. The relation between the geodesic flow on~$\Gamma\backslash\h$ and the map~$H$ is as follows: using the upper half plane model for~$\h$, for any geodesic $\wh\gamma$ on~$\Gamma\backslash\h$ we fix a representing geodesic~$\gamma$ on~$\h$ which satisfies $\gamma(+\infty)\in (0,\infty)$ and $\gamma(-\infty)\in (-\infty,0)$ (this is indeed possible). This is equivalent to asking that $\gamma$ intersects the imaginary axis~$i\R_{>0}$, say at time~$t_0$ (i.e., $\gamma(t_0)\in i\R_{>0}$), and travels from left to right (in the euclidean coordinates of~$\h$). Then $\wh\gamma$ intersects $\Gamma\backslash i\R_{>0}$ also at time~$t_0$. Suppose that $(t_j)_{j\in J}$ with $J\subseteq \Z$ is the sequence of intersection times of $\wh\gamma$ with $\Gamma\backslash i\R_{>0}$. Each intersection time is corresponding to a (unique) representing geodesic on~$\h$ intersecting $i\R_{>0}$ at that time. Let $(\gamma_j)_{j\in J}$ be the corresponding sequence of geodesics on~$\h$ with endpoints sequence $\bigl((\gamma_j(+\infty), \gamma_j(-\infty))\bigr)_{j\in J}$. Note that $\gamma=\gamma_0$. Applying $H$ iteratively to~$(\gamma(+\infty), \gamma(-\infty))$ moves along this sequence in positive index direction; applying $H^{-1}$ allows us to move in negative index direction. In other words, the application of~$H$ corresponds to moving along~$\wh\gamma$ from time $t_j$ to $t_{j+1}$. In the case that $\gamma(+\infty)\in\Gamma.\infty$ or $\gamma(-\infty)\in\Gamma.\infty$, the map~$H$ is not defined initially. In these cases, $J\not=\Z$. However, if $\gamma(+\infty)$ is not of the form $g_k^{-1}.0$ or $g_k^{-1}.\infty$ for some $k\in\{1,\ldots, q-1\}$, then we can continuously extend $H$ to $\gamma(+\infty)$. Analogously, for $H^{-1}$ and $\gamma(-\infty)$ of the form $g_k.(-\infty)$ or $g_k.0$. In the remaining cases, there are no future (or past) intersection times after $t_0$ (before $t_0$). For a detailed discussion we refer to~\cite{Pohl_Symdyn2d}.

For our purposes, we restrict to the ``forward direction" of~$H$, meaning to the map
\[
H_+ \colon (0,\infty)_\st \to (0,\infty)_\st
\]
that is determined by the bijections
\[
(g_k^{-1}.0,g_k^{-1}.\infty)_\st \to (0,\infty)_\st\,,\quad x\mapsto g_k.x\,,
\]
for $k\in\{1,\ldots, q-1\}$. This map enjoys a symmetry under the action of~$Q$, i.e., $Q.H_+(x) = H_+(Q.x)$. Factoring out this symmetry results in the map
\[
F_+ \colon (0,1)_\st \to (0,1)_\st
\]
that is given by the bijections
\begin{align*}
\bigl(g_k^{-1}.0,g_k^{-1}.1\bigr)_\st &\to (0,1)_\st\,, \quad x\mapsto g_k.x\,,
\intertext{and}
\bigl((Qg_k)^{-1}.1, (Qg_k)^{-1}.0\bigr)_\st & \to (0,1)_\st\,,\quad x\mapsto Qg_k.x\,,
\end{align*}
for $k\in\{(q+1)/2,\ldots, q-1\}$. The continuous extension of~$F_+$ to all of $X=[0,1]$ is the generalized Farey map~$\FM$.

The conjugation relation between~$H$ and the geodesic flow implies that the subsemigroup of~$\Gamma$ generated by $\{g_k^{-1}: k=1,\ldots, q-1\}$ is free. This property gets inherited under the $Q$-symmetry and shows that the semigroup in~$\PGL_2(\R)$ generated by~$\Lambda$ (see~\eqref{eq:Lambda}) is indeed free.

\subsection{Topologically mixing}
\label{sec:topmix}

With the following proposition we show that the Farey map~$\FM$ is topologically mixing.
Essentially, this is a consequence of the bijections in~\eqref{eq:FM1} and \eqref{eq:FM2} being sufficiently expanding and having full images.
We provide here another proof, taking advantage of the origin of~$\FM$ (see Section~\ref{sec:origin}) and results established in~\cite{Pohl_Symdyn2d}.

\begin{prop}\label{prop:Fareytopmix}
The Farey map~$\FM$ is topologically mixing.
\end{prop}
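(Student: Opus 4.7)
The plan is to exploit the full-branch Markov structure of $\FM_q$. By \eqref{eq:FM1}--\eqref{eq:FM2}, each of the $q-1$ branches of $\FM_q$ is a M\"obius bijection from its domain onto the full interval $[0,1]$, and the branch domains partition $[0,1]$ up to their common endpoints. Composing $n$ inverse branches therefore partitions $[0,1]$ into $(q-1)^n$ closed intervals, which I shall call \emph{cylinders of level~$n$}, each mapped homeomorphically onto $[0,1]$ by $\FM_q^n$. The freeness of the semigroup generated by $\Lambda_q$ noted in Section~\ref{sec:origin} ensures that distinct elements of $W_{q,n}$ give rise to distinct cylinders.

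I would then reduce topological mixing to the stronger claim that for every nonempty open $U \subseteq [0,1]$ there is some $N \in \N$ with $\FM_q^n(U) = [0,1]$ for all $n \geq N$. This follows once I exhibit a cylinder $C$ of some level~$N$ contained in~$U$: indeed $\FM_q^N(U) \supseteq \FM_q^N(C) = [0,1]$, and surjectivity of $\FM_q$ propagates this to $\FM_q^n(U) = [0,1]$ for every $n \geq N$. The whole problem thus reduces to showing that every nonempty open subset of $[0,1]$ contains a cylinder of some sufficiently high level.

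The easy half is uniform expansion away from the fixed point. On any interval $[\varepsilon,1]$ with $\varepsilon > 0$, the branches of $\FM_q$ are M\"obius maps with derivatives uniformly bounded away from~$1$, so diameters of cylinders meeting $[\varepsilon,1]$ shrink geometrically in~$n$. Hence any nonempty open set bounded away from~$0$ contains cylinders of all sufficiently high level.

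The hard part, and the main obstacle, is the indifferent fixed point at~$0$: the unique branch of $\FM_q$ fixing~$0$ is parabolic with derivative~$1$ there, so the nested cylinders $C_n \ni 0$ shrink only polynomially. I would handle this via the explicit parabolic asymptotics to be developed in Section~\ref{sec:AFN}, from which $\abs{C_n}\to 0$ can be read off by direct iteration of the corresponding parabolic M\"obius transformation. Consequently every open neighborhood of~$0$ also contains cylinders of arbitrarily high level, and the stronger claim---hence topological mixing---follows. An alternative route would be to deduce topological mixing from that of the Poincar\'e section map~$H$ of Section~\ref{sec:origin}, which itself inherits topological mixing from the geodesic flow on $\Gamma_q \backslash \h$ together with the fact that $\FM_q$ is a factor of~$H$.
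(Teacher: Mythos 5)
Your overall skeleton coincides with the paper's: full-branch Markov structure, level-$n$ cylinders mapped bijectively onto $[0,1]$, and the reduction of mixing to the statement that every nonempty open set contains a cylinder of some level. The gap is in how you justify that statement, and you have located the difficulty in the wrong place. The diameter of a level-$n$ cylinder is controlled by the derivative of $\FM^n$ along the \emph{entire itinerary}, not by where the cylinder sits. Take the itinerary whose first symbol is any non-parabolic branch $h$ and whose remaining $n-1$ symbols are the parabolic branch: the cylinder is $h.\bigl(g_{q-1}^{-(n-1)}.(0,1)\bigr) = h.\bigl(\bigl(0,\tfrac{1}{(n-1)\cw+1}\bigr)\bigr)$, which is contained in $[\tfrac{1}{\cw+1},1]$, hence bounded away from~$0$, yet has diameter comparable to $1/n$ because $|h'|$ is bounded below on $[0,1]$. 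So your ``easy half'' claim that cylinders meeting $[\eps,1]$ shrink geometrically is false, and ``uniform expansion away from the fixed point'' does not by itself give that open sets bounded away from $0$ contain high-level cylinders. (For $q=3$ the premise fails even more directly, since $|\FM_3'(1)|=1$, and the proposition is stated for all odd $q\ge 3$.) Conversely, your ``hard part'' is the trivial one: the nested cylinders containing $0$ are exactly $[0,\tfrac{1}{n\cw+1}]$, and in any case every nonempty open set contains an open set bounded away from $0$, so that case adds nothing.

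What is actually needed is that the mesh of the level-$n$ cylinder partition tends to $0$ despite arbitrarily long parabolic blocks in the itineraries. The paper does not reprove this; it cites \cite[Proposition~8.28, Remark~8.32]{Pohl_Symdyn2d} for the shrinking of the refinements $\bigvee_{j=0}^n\FM^{-j}(\eta)$ and then runs the same cylinder argument you propose. To make your proof self-contained you would have to supply a block decomposition: every non-parabolic letter contracts $[0,1]$ by a factor at most $\cw^{-2}<1$ (for $q\ge5$); a trailing parabolic block of length $m$ produces the explicit interval $\bigl(0,\tfrac{1}{m\cw+1}\bigr)$; and any parabolic block that is not at the end of the word is applied to a set contained in $[\tfrac{1}{\cw+1},1]$, where $(g_{q-1}^{-m})'\le \bigl(\tfrac{\cw+1}{m\cw+\cw+1}\bigr)^2$. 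Combining these three effects gives that all level-$n$ diameters tend to $0$ uniformly, which closes the gap; as written, your argument does not. Your alternative route via the section map $H$ and the geodesic flow is also not immediate (topological mixing of a flow does not transfer automatically to a cross-section map), so it cannot be used as a fallback without further work.
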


\begin{proof}
We set
\[
\eta\coloneqq \left\{ \left[g_k^{-1}.0,g_k^{-1}.1\right], \left[(Qg_k)^{-1}.1, (Qg_k)^{-1}.0\right] : k = \frac{q+1}2,\ldots, q-1  \right\}
\]
and consider, for $n\in\N_0$, the refinement
\begin{equation}
\label{eq:refinement}
\eta_0^n\coloneqq \bigvee_{j=0}^n \FM^{-j}(\eta) = \left\{  \bigcap_{j=0}^n \FM^{-j}(A_j) : A_j \in\eta,\ j\in\{0,\ldots, n\}\right\}\,.
\end{equation}
This family consists of intervals which are iterative refinements of the initial intervals in~$\eta$. For any choice of $A_j\in\eta$, $j\in\{0,\ldots, n\}$, the evaluation of~\eqref{eq:FM1} and~\eqref{eq:FM2} shows that
\begin{equation}
\label{eq:bijFMn}
\bigcap_{j=0}^n \FM^{-j}(A_j) \to [0,1]\,,\quad x\mapsto \FM^{n+1}(x)\,,
\end{equation}
is a bijection. Further, from the origin of~$\FM$ and the results proved in~\cite[Proposition~8.28, Remark~8.32]{Pohl_Symdyn2d} (initially for the map~$H_+$ from Section~\ref{sec:origin} and then inherited to~$\FM$ by $Q$-symmetry) we know that refinement intervals in the family~\eqref{eq:refinement} become arbitrarily small as $n\to\infty$.

Let $B_1, B_2$ be non-empty, open subsets of~$X$. Then we find $N\in\N_0$ such that $B_1$ contains some interval from~$\eta_0^N$. Hence, using the bijectivity of~\eqref{eq:bijFMn}, we obtain that $\FM^{n+1}(B_1) \cap B_2 \not=\emptyset$ for all $n\geq N$, showing that $\FM$ is topologically mixing.
\end{proof}

\subsection{AFN-maps}
\label{sec:AFN}

In this section we show that the generalized Farey map~$\FM=\FM_q$ for odd $q\geq 5$ is an AFN-map, but not for $q=3$. We further discuss that for any odd $q\geq 3$, the map~$\FM$ is ergodic and conservative for any measure on~$X$ that is absolutely continuous to the Lebesgue measure~$\leb$. We start by recalling the definition of AFN-maps and the auxiliary notion of piecewise monotonic systems, which both were introduced in~\cite{zweimuller_structure_2000}.

\begin{defi}
	A \emph{piecewise monotonic system} is a triple $(X, T, \prt)$ where $X$ is the union of some finite family of disjoint bounded open intervals, $\prt$ is a collection of non-empty, pairwise disjoint open subintervals with the following two properties:
	With
	\begin{equation*}
		X_{\prt} \coloneqq \bigcup_{\inter\in\prt}\inter
	\end{equation*}
	we have
	\begin{equation*}
		\leb\left(X\setminus X_{\prt}\right)=0\,.
	\end{equation*}
	Further, $T\colon X\to X$ is a map such that $T\lvert_{\inter}$ is continuous and strictly monotonic for each $\inter\in\prt$.
\end{defi}

\begin{defi}\label{def:afn}
	Let $(X, T, \prt)$ be a piecewise monotonic system.
	Suppose that $T\lvert_\inter$ is twice differentiable for each $\inter\in\prt$.
	Then $T$ is called an \emph{AFN-map} if it satisfies the following properties:
	\begin{enumerate}
		\item[(A)] \emph{Adler's condition: } The map~$T''/(T')^2$ is bounded on $X_{\prt}$.
		\item[(F)] \emph{Finite image condition: } The set~$T(\prt)=\{T(\inter):\inter\in\prt\}$ is finite.
		\item[(N)] \emph{Non-uniformly expanding: } There is a finite set $\prt_0\subseteq\prt$ such that each $\inter\in\prt_0$ has a (unique) element~$x_{\inter}$ in the boundary of~$I$ with the following properties:
		\begin{itemize}
			\item We have
			\begin{align*}
				\lim\limits_{\substack{ x\in\inter\\ x\to x_{\inter}}} T(x) = x_\inter
				\shortintertext{and}
				\lim\limits_{\substack{ x\in\inter\\ x\to x_{\inter}}} T'(x) = 1\,.
			\end{align*}
			\item The map~$T'$ decreases on~$(-\infty, x_{\inter})\cap\inter$ or on~$(x_{\inter},\infty)\cap\inter$, depending on which of these sets is non-empty.
            \item The map~$T$ is uniformly expanding on sets bounded away from $\{x_{\inter}:\inter\in\prt_0\}$.
		That is, for all $\varepsilon>0$ there exists~$\varrho >1$ such that for all $x\in X\setminus\bigcup_{\inter\in\prt_0}((x_{\inter}-\varepsilon,x_{\inter}+\varepsilon)\cap\inter)$ we have
		\begin{equation*}
			|T'(x)|\geq \varrho\,.
		\end{equation*}
		\end{itemize}
    \end{enumerate}
    The set~$\prt_0$ is indeed unique. The elements~$x_\inter$, $\inter\in\prt_0$, are called the \emph{indifferent fixed points} of~$T$. The complement of the set of indifferent fixed points of~$T$ is denoted~$X'$.
\end{defi}

In regard to the generalized Farey map~$\FM$, we set throughout
\[
 \prt \coloneqq \left\{ g.(0,1): g\in \Lambda \right\}\,,
\]
where $\Lambda$ is the set defined in~\eqref{eq:Lambda}.
With respect to the Lebesgue measure~$\leb$ and hence any measure absolutely continuous to~$\leb$, the system~$\prt$ is a measure-theoretic partition of~$X=[0,1]$. One easily checks that $0$ is the only indifferent fixed point of~$\FM$. The proof of Proposition~\ref{prop:FareyAFN} below shows that for~$\FM=\FM_q$ with $q\geq 5$ odd, we have
\[
\prt_0 = \left\{ \bigl(g_{q-1}^{-1}.0,g_{q-1}^{-1}.1\bigr) \right\} = \left\{ \left( 0, \frac{1}{\cw+1}\right) \right\}
\]
and
\begin{equation}
\label{eq:complindfixed}
X' = (0,1]\,.
\end{equation}

\begin{prop}\label{prop:FareyAFN}
For every odd $q\geq 5$, the Farey map~$\FM=\FM_q$ is an AFN-map.
\end{prop}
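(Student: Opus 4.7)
The plan is to verify the conditions (A), (F), and (N) of Definition~\ref{def:afn} directly, exploiting that $(X, \FM_q, \prt)$ is a piecewise monotonic system and that on every branch $I\in\prt$ the map $\FM_q|_I$ is the Möbius transformation induced by an element $g=\textbmat{a}{b}{c}{d}\in\Lambda_q$ with $|\det g|=1$. A direct computation then gives $\FM_q'(x)=\pm(cx+d)^{-2}$ and $\FM_q''(x)/(\FM_q'(x))^2=\mp 2c(cx+d)$, which is bounded on the closed (bounded) branch, yielding Adler's condition~(A). The finite image condition~(F) is immediate from~\eqref{eq:FM1}--\eqref{eq:FM2}, since every branch is mapped bijectively onto $(0,1)$, whence $\FM_q(\prt)=\{(0,1)\}$.

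For condition~(N), I would first locate the indifferent fixed point. From $s(q)=0$, $s(q-1)=1$ and $s(q-2)=\cw_q$, which follow by elementary trigonometry, one obtains $g_{q,q-1}=\textbmat{1}{0}{-\cw_q}{1}$, so the branch $\bigl(g_{q-1}^{-1}.0,g_{q-1}^{-1}.1\bigr)=\bigl(0,1/(\cw_q+1)\bigr)$ carries the map $\FM_q(x)=x/(1-\cw_q x)$. In particular $\FM_q(0)=0$, and $\FM_q'(x)=(1-\cw_q x)^{-2}$ tends to $1$ as $x\to 0$ and is monotone on this branch, giving the tangency conditions of~(N) and producing the claimed $\prt_0$.

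The essential step is to establish the uniform expansion bound: $|\FM_q'|\geq\varrho>1$ on every set bounded away from $0$. Using the determinant identity $s(k)^2-s(k-1)s(k+1)=1$, a direct calculation expresses the value of $|\FM_q'|$ at each branch endpoint as a simple polynomial in $\{s(k-1),s(k),s(k+1)\}$; since on each branch $\FM_q'$ is of the form $\pm(cx+d)^{-2}$, its extrema on the closed branch are attained at the endpoints. The critical case arises at $x=1$, which is the right endpoint of $\bigl((Qg_{(q+1)/2})^{-1}.1,(Qg_{(q+1)/2})^{-1}.0\bigr)$, and a product-to-sum manipulation of $s((q+1)/2)-s((q+3)/2)$ yields
\[
 |\FM_q'(1)| = \bigl(2\sin(\pi/(2q))\bigr)^{-2}.
\]
For $q\geq 5$ this strictly exceeds $1$ since $\sin(\pi/(2q))\leq\sin(\pi/10)<1/2$; combined with the estimate $s(k-1)\geq\cw_q>1$ for all admissible $k$ (using the symmetry $s(k')=s(q-k')$ and decreasing behaviour of $s$ past $q/2$), every endpoint value of $|\FM_q'|$ other than at $0$ is strictly greater than $1$. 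Continuity and compactness on each closed branch then yield the required uniform lower bound.

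The main obstacle is precisely this trigonometric estimate. The value $\bigl(2\sin(\pi/(2q))\bigr)^{-2}$ equals $1$ exactly at $q=3$ (where $\sin(\pi/6)=1/2$) and exceeds $1$ for every odd $q\geq 5$. This single inequality is what makes the AFN condition fail at $q=3$ and succeed for $q\geq 5$, explaining the restriction in the proposition.
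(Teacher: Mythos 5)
Your proposal is correct and follows essentially the same route as the paper: verify (A) and (F) directly from the Möbius form of the branches, locate the unique indifferent fixed point $0$ on the $g_{q-1}$-branch, and obtain uniform expansion away from $0$ by evaluating $|\FM_q'|$ at the branch endpoints where it is extremal, using the identities $s(k)^2-s(k-1)s(k+1)=1$ and $s(k),s(k-1)\geq\cw_q$ — exactly the paper's bound $|\FM_q'|\geq\cw_q^2>1$ for $q\geq 5$. The only (harmless) inaccuracy is calling $x=1$ the critical case: your value $|\FM_q'(1)|=\bigl(2\sin(\pi/(2q))\bigr)^{-2}=s\bigl((q-1)/2\bigr)^2$ is the minimum over the expanding branches only for $q\in\{3,5\}$, while for $q\geq 7$ the minimum is $\cw_q^2$, attained at other endpoints; since you bound all endpoints below by $\cw_q>1$ anyway, the argument still goes through, and the $q=3$ failure is correctly identified.
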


\begin{proof}
	The union of all elements in the collection $\prt$ clearly has full Lebesgue measure in~$X$.
	On each interval in $\prt$, the Farey map $\FM$ corresponds to a fractional linear transformation associated to some $g=\textbmat{a}{b}{c}{d}\in \PSL(2,\R)$.
	As the derivative of the action of $g$ is given by $g'(x) = \det(g)(cx+d)^{-2}$ for all $x\in\R$ such that $g.x\neq \infty$, it follows that $(X,\FM,\prt)$ is piecewise monotonic.
	Since $\prt$ is a finite set, it is obvious that $\FM$ satisfies the finite image condition.
	In particular, each branch (i.e., $\FM\vert_\inter$ for $\inter\in\prt$) is surjective onto $(0,1)$.
	
	The second derivative of a fractional linear transformation $g=\textbmat{a}{b}{c}{d}\in \PSL(2,\R)$ is given by $g''(x)=-2c\det(g)(cx+d)^{-3}$.
	It follows that
	\begin{equation*}
		\frac{g''(x)}{(g'(x))^2} = -2c\det(g)(cx+d)\,.
	\end{equation*}
	Since $\FM$ consists of finitely many branches, there is a maximum value which bounds this quotient uniformly, yielding the Adler property for $\FM$.
	
	It remains to show that $\FM$ is non-uniformly expanding.
	We find that 
	\begin{equation*}
		\prt_0 = \left\{ \bigl(g_{q-1}^{-1}.0,g_{q-1}^{-1}.1\bigr) \right\}
	\end{equation*}
	as the point $0$ in the boundary of the interval in $\prt_0$ satisfies
	\begin{align*}
		\lim_{x\to 0}\FM(x) &= \lim_{x\to 0}g_{q-1}.x = \lim_{x\to 0}\frac{x}{-\cw x+1} = 0
		\shortintertext{as well as}
		\lim_{x\to 0}\FM'(x) &= \lim_{x\to 0}g_{q-1}'(x) = \lim_{x\to 0}\frac{1}{(-\cw x+1)^2} = 1\,.
	\end{align*}
	In particular, for each $\varepsilon>0$ and $x\in g_{q-1}^{-1}.(0,1)\setminus(0,\varepsilon)$ we have
	\begin{equation}
	\label{eq:estFM1}
		\lvert \FM'(x)\rvert = g_{q-1}'(x) \geq  \frac{1}{(-\cw \varepsilon+1)^2} >1\,.
	\end{equation}
	For the last inequality we note that for $\eps > 1/(\cw +1)$, the set $g_{q-1}^{-1}.(0,1)\setminus(0,\varepsilon)$ is empty.
	Consider the matrix representation of $g_k$ given in~\eqref{eq:gk}. Then we have $cx+d>0$ for any $g=\textbmat{a}{b}{c}{d}\in \Lambda$ and $x\in g.(0,1)$.
	Let $k\in\{(q+1)/2,\dotsc,q-2\}$ and $x\in g_k^{-1}.(0,1)$.
	Then $\FM(x)=g_k.x$ and the coefficient~$c$ is negative, which implies
	\begin{equation*}
		g_k''(x)=-2c(cx+d)^{-3}>0\,.
	\end{equation*}
	Since $g_k^{-1}$ is monotone increasing, it follows that
	\begin{equation}
	\label{eq:estFM2}
		\lvert \FM'(x)\rvert = g_k'(x)\geq g_k'(g_k^{-1}.0) = \left(\frac{\sin(k\pi/q)}{\sin(\pi/q)}\right)^2
		\geq \cw^2\,.
	\end{equation}

	Let $x\in g_k^{-1}Q.(0,1)$ for $k\in\{(q+1)/2,\dotsc, q-1\}$.
	The coefficient $c$ of $Qg_k$ is positive, which implies $\frac{d}{dx}\lvert(Qg_k)'(x)\rvert = -2c(cx+d)^{-3}<0$.
	Consequently,
	\begin{equation}
	\label{eq:estFM3}
		\lvert \FM'(x)\rvert = \lvert(Qg_k)'(x)\rvert \geq \lvert(Qg_k)'(g_k^{-1}Q.0)\rvert = \left(\frac{\sin((k-1)\pi/q)}{\sin(\pi/q)}\right)^2\geq \cw^2\,.
	\end{equation}
	For $q\geq 5$, we have $\cw>1$. Thus, the combination of~\eqref{eq:estFM1}--\eqref{eq:estFM3} yields that $\FM$ is non-uniformly expanding. This completes the proof.
\end{proof}

We saw in the proof above that $|\FM_q'|\geq \cw_q$ on the partition elements in $\prt\setminus\prt_0$. For $q=3$ we have $\cw_3=1$, and indeed $|\FM_3'(1)|=1$ which means that $\FM_3$ does not satisfy Condition~(N) of an AFN-map.

We now turn to establish that the generalized Farey maps are ergodic and conservative with respect to any measure on~$X$ that is absolutely continuous to the Lebesgue measure~$\leb$. For the classical Farey map, i.e., for $\FM_3$, this is well-known. See~\cite{MR3585883}. 
In order to treat~$\FM_q$ for odd $q>3$, we first prove a general result on ergodicity and conservativity for topologically mixing AFN-maps. For any measure~$\nu$ on~$(X,\mc B)$ and any measurable set $B_1,B_2\in\mc B$ we write $B_1\stackrel{\nu}{=}B_2$ if $B_1$ and $B_2$ differ only by a $\nu$-null set, i.e., if $B_1\triangle B_2 = (B_1\setminus B_2) \cup (B_2\setminus B_1)$ is a $\nu$-null set.

\begin{lemma}
\label{lem:AFNerg}
	Any topologically transitive AFN-map on~$X$ is ergodic and conservative with respect to the Lebesgue measure~$\leb$ and hence any measure absolutely continuous to~$\leb$.
\end{lemma}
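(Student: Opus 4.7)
The plan is to follow the standard strategy for piecewise expanding maps with indifferent fixed points: extract local density information at a Lebesgue density point and propagate it globally by exploiting bounded distortion, non-uniform expansion, and topological transitivity.

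First I would establish uniform bounded distortion for all iterates. Adler's condition bounds $T''/(T')^2$ on $X_\prt$, so for any $n$-cylinder $J_n = \bigcap_{j=0}^{n-1} T^{-j}(I_j)$ with $I_j \in \prt$, the chain rule plus a telescoping estimate yields a constant $C\geq 1$ (independent of $n$ and $J_n$) with $|(T^n)'(x)|/|(T^n)'(y)| \leq C$ for all $x,y\in J_n$. Combined with the finite image condition, each $T^n\colon J_n\to T^n(J_n)$ is a diffeomorphism onto one of the finitely many elements of $T(\prt)$, with the derivative of $T^n$ comparable to $\leb(T^n(J_n))/\leb(J_n)$. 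As a second preparatory step, the non-uniform expansion implies that cylinders meeting any compact set $K\subseteq X'$ shrink in diameter as $n\to\infty$: an orbit starting in $K$ cannot linger arbitrarily long near an indifferent fixed point, so each excursion into the uniformly expanding region provides a definite expansion factor and the accumulated expansion diverges.

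For ergodicity, let $A\subseteq X$ be $T$-invariant with $\leb(A)>0$. Since the finitely many indifferent fixed points form a $\leb$-null set, the Lebesgue density theorem supplies a density point $x_0\in A\cap X'$. The cylinders $J_n\ni x_0$ shrink to $\{x_0\}$ by the previous step, hence $\leb(A\cap J_n)/\leb(J_n)\to 1$. Bounded distortion together with $T$-invariance of $A$ transports this density to the images, giving $\leb(A\cap T^n(J_n))/\leb(T^n(J_n))\to 1$; and since $T^n(J_n)$ lies in the finite set $T(\prt)$, there is at least one $U\in T(\prt)$ with $\leb(A\cap U)=\leb(U)$. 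Topological transitivity together with the full-branch property (each $T\vert_I$, $I\in\prt$, surjects onto an element of $T(\prt)$) then allows me to iteratively spread this full $\leb$-measure from $U$ to arbitrary non-empty open subsets of $X$, yielding $\leb(A)=\leb(X)$.

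For conservativity, I would invoke the Hopf decomposition: topological transitivity forbids non-trivial wandering open sets, and the expansion estimate from Definition~\ref{def:afn}(N) together with a standard intermittency-type argument shows that the set of orbits eventually trapped in arbitrarily small neighborhoods of the indifferent fixed points has Lebesgue measure zero. Hence the dissipative part is $\leb$-null and $T$ is $\leb$-conservative. The passage from $\leb$ to any $\nu\ll\leb$ is immediate: invariant sets of $\leb$-measure zero or of full $\leb$-measure are $\nu$-null or $\nu$-full respectively, and wandering sets for $\nu$ are wandering as sets, hence $\leb$-null, hence $\nu$-null. The main obstacle will be the diameter-shrinking step for cylinders that traverse long strings of branches in $\prt_0$ adjacent to the indifferent fixed points; once uniform shrinkage on compact subsets of $X'$ is secured, the density-point and Hopf arguments proceed along lines familiar from Zweimüller's theory.
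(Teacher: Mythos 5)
Your proposal does not follow the paper's route, and as written it contains genuine gaps. The paper's proof is much lighter: it invokes Zweim\"uller's structure theorem \cite[Theorem~1]{zweimuller_structure_2000}, which already delivers finitely many pairwise disjoint open sets $X_1,\dotsc,X_m$, each a finite union of intervals, with $T(X_i)\stackrel{\leb}{=}X_i$ and $T\vert_{X_i}$ ergodic and conservative for $\leb$; topological transitivity is then used only to show $m=1$ (an open interval in $X\setminus X_1$ of positive measure would be reached by a forward image of an interval $I\subseteq X_1$ with $\leb(T^n(I))>0$, contradicting $T(X_1)\stackrel{\leb}{=}X_1$). You instead attempt to reprove the core of that structure theory from scratch, and the hard parts are exactly the ones you leave unproved.

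Concretely: (i) uniform bounded distortion for all $n$-cylinders does \emph{not} follow from Adler's condition ``by chain rule plus telescoping''; that telescoping argument needs the intermediate images to contract geometrically, i.e.\ uniform expansion. For cylinders making long excursions along the branch adjacent to an indifferent fixed point the sum of intermediate diameters is not controlled by the diameter of the final image, and distortion control there requires the monotonicity of $T'$ near $x_\inter$ from condition (N) (Thaler-type estimates) or passage to the jump/first-return transformation---precisely the obstacle you acknowledge at the end but never resolve. (ii) Your ergodicity ``spreading'' step is too quick: from $\leb(A\cap U)=\leb(U)$ for some $U\in T(\prt)$ and invariance, pulling the null set $U\setminus A$ back through nonsingular iterates only gives $\leb(A\cap B)>0$ for every non-empty open $B$, not $\leb(A\cap B)=\leb(B)$, so the conclusion $\leb(A)=\leb(X)$ does not follow as stated; one must also apply the density-point argument to the invariant complement $A^{\complement}$, locate a $U'\in T(\prt)$ in which it has full measure, and use transitivity to map a positive-measure piece of $U$ into $U'$ to reach a contradiction. (iii) For conservativity, the wandering sets in the Hopf decomposition are merely measurable, so topological transitivity does not exclude them; moreover the set of orbits ``trapped near the indifferent fixed points'' is essentially empty anyway, since condition (N) makes those fixed points one-sidedly repelling, so your argument does not address where dissipativity could actually arise. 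Conservativity of topologically transitive AFN-maps is a substantive part of Zweim\"uller's theorem (again proved via the jump transformation), not a corollary of these observations. Citing \cite{zweimuller_structure_2000} and arguing as the paper does closes all three gaps at once.
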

\begin{proof}
	Let $T$ be a topologically transitive AFN-map on $X=[0,1]$.
	It is proven in \cite[Theorem~1]{zweimuller_structure_2000} that there exists a finite number of pairwise disjoint open sets $X_1,\dotsc,X_m$ such that
	\begin{itemize}
		\item $T(X_i)\stackrel{\leb}{=}X_i$ and $T\lvert_{X_i}$ is ergodic and conservative with respect to~$\leb$.
		\item Each $X_i$ admits a finite partition into sets that are finite unions of open intervals.
	\end{itemize}
	We will now show that $X\stackrel{\leb}{=} X_1$, which immediately yields that $T$ is ergodic and conservative with respect to~$\leb$. To that end we first note that the property~(N) of non-uniform expansiveness of~$T$ from Definition~\ref{def:afn} implies that for each open, non-empty interval $I\subseteq X$ and each $n\in\N$ there exist an open interval $J\subseteq I$ and some $\varrho>1$ such that
	\[
	\left| \bigl(T^n\bigr)'\vert_J\right| \geq \varrho\,.
	\]
	In particular, $\leb(T^n(I))>0$. We now set $E\coloneqq X\setminus X_1$ and suppose $\leb(E)>0$, with the goal to establish a contradiction. As $E$ is a finite union of closed intervals (among which might be singletons), there exists an open, non-empty interval $B\subseteq E$. Since $T$ is topologically transitive, we find $n\in\N$ such that
	\[
	T^n (X_1) \cap B \not=\emptyset\,.
	\]
As $X_1$ decomposes into a finite union of open, non-empty intervals and $T$ is piecewise continuous, the stability of connectedness under continuous maps in combination with $B$ being an open (connected) interval shows that there is an open interval $I\subseteq X_1$ such that $T^n(I) \subseteq B$. By our previous argumentation, $\leb(T^n(I)) > 0$. This contradicts to $T(X_1) \stackrel{\leb}{=} X_1$. In turn, $\leb(E) = 0$.
\end{proof}

As topologically mixing maps are \emph{a fortiori} topologically transitive, the combination of Proposition~\ref{prop:FareyAFN} with Lemma~\ref{lem:AFNerg} yields the following corollary for odd $q\geq 5$. For $q=3$, it is in~\cite{MR3585883}.

\begin{cor}
\label{cor:ergcons}
	For all odd $q\geq 3$, the Farey map~$\FM=\FM_q$ is ergodic and conservative with respect to any measure on~$X$ that is absolutely continuous to the Lebesgue measure.
\end{cor}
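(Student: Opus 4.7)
The plan is to split the argument into two cases depending on the value of~$q$, invoking the previously established infrastructure in each case.

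For odd~$q\geq 5$, the plan is to combine three results already at our disposal. First, Proposition~\ref{prop:Fareytopmix} asserts that $\FM_q$ is topologically mixing. Since topologically mixing maps are a fortiori topologically transitive, $\FM_q$ is topologically transitive. Second, Proposition~\ref{prop:FareyAFN} establishes that $\FM_q$ is an AFN-map. Third, Lemma~\ref{lem:AFNerg} applies to any topologically transitive AFN-map on~$X$ and yields ergodicity and conservativity with respect to the Lebesgue measure~$\leb$ as well as any measure absolutely continuous to~$\leb$. Chaining these three facts gives the desired conclusion for odd $q\geq 5$.

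For~$q=3$, our technique based on Lemma~\ref{lem:AFNerg} is unavailable because, as observed after the proof of Proposition~\ref{prop:FareyAFN}, the classical Farey map~$\FM_3$ fails condition~(N) of the AFN-map definition (indeed $\cw_3=1$, so $|\FM_3'(1)|=1$). However, for the classical Farey map, ergodicity and conservativity with respect to~$\leb$ are well-known; the plan is simply to cite~\cite{MR3585883}. Absolute continuity then transports ergodicity and conservativity from $\leb$ to any measure absolutely continuous with respect to~$\leb$, since these properties depend only on the collection of null sets.

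I do not anticipate any real obstacles: the hard work has already been carried out in Propositions~\ref{prop:Fareytopmix} and~\ref{prop:FareyAFN} and in Lemma~\ref{lem:AFNerg}, and the corollary consists solely in assembling these ingredients and adding a pointer to the literature for the exceptional value $q=3$.
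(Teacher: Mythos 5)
Your proposal is correct and follows essentially the same route as the paper: for odd $q\geq 5$ it chains Propositions~\ref{prop:Fareytopmix} and~\ref{prop:FareyAFN} with Lemma~\ref{lem:AFNerg} (using that topologically mixing implies topologically transitive), and for $q=3$ it cites the known result in~\cite{MR3585883}, with the passage to absolutely continuous measures handled exactly as the paper does.
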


\subsection{Transfer operators and invariant measure}
\label{sec:measure}

In this section, we establish the existence of an essentially unique $\sigma$-finite $\FM$-invariant measure on~$(X,\mc B)$ that is absolutely continuous to the Lebesgue measure~$\leb$ and we provide its density in explicit form. We further provide an explicit expression for the transfer operator associated to~$\FM$ and this measure. We emphasize that throughout this section we consider all odd $q\geq 3$.

For any measure~$\nu$ on~$(X,\mc B)$, the transfer operator
\[
L \colon L^1(X,\nu) \to L^1(X,\nu)
\]
associated to~$(\FM,\nu)$ is determined by the identity
\begin{equation}\label{eq:defTO}
\forall\, B\in\mc B\ \forall\, f\in L^1(X,\nu)\colon  \int_B L(f)\dd\nu = \int_{\FM^{-1}(B)} f\dd\nu\,.
\end{equation}
For $\nu$ being the Lebesgue measure~$m$, we call the transfer operator associated to~$(\FM,\leb)$ \emph{Perron--Frobenius operator} and denote it by~$\PF$. It has the explicit expression
\[
 \PF = \sum_{k=\frac{q+1}{2}}^{q-1} \tau(g_k) + \tau(Qg_k)\,,
\]
where
\[
\tau(g)f(x) \coloneqq \bigl|(g^{-1})'(x)\bigr|f\bigl( g^{-1}.x\bigr)
\]
for $g\in\PGL_2(\R)$ and $f\colon \R\setminus\{g.\infty\} \to \C$. See also~\cite{Pohl_spectral_hecke}, where it arises as the transfer operator with parameter~$1$ for the triangle group~$\langle\Gamma, Q\rangle$. The essentially unique eigenfunction of the Perron--Frobenius operator~$\PF$ with eigenvalue~$1$ is the density of an $\FM$-invariant $\sigma$-finite measure. In Proposition~\ref{prop:eigenfunctionh} we provide this eigenfunction, in Proposition~\ref{prop:invmeasure} we show the invariance of the associated measure (among other results).

\begin{prop}
\label{prop:eigenfunctionh}
	The map
	\begin{equation*}
		h\colon (0,1]\to\C,\quad x\mapsto\frac{1}{x}
	\end{equation*}
	is a real analytic eigenfunction of $\PF$ with eigenvalue $1$.
\end{prop}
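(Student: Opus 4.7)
The plan is to verify the eigenfunction equation $\PF h = h$ by direct computation, exploiting the fact that each summand in $\PF h$ splits, after partial fractions, into a telescoping difference. First I would unpack the operator: for any $g = \bmat{a}{b}{c}{d}$ with $\det g = \pm 1$ one has $g^{-1}.x = (dx-b)/(-cx+a)$ and $(g^{-1})'(x) = \det(g)/(-cx+a)^{2}$, so
\[
 \tau(g)\,h(x) \;=\; \frac{|(g^{-1})'(x)|}{g^{-1}.x} \;=\; \frac{1}{|(-cx+a)(dx-b)|}.
\]
Substituting the entries from~\eqref{eq:gk} for $g_{q,k}$, noting that $Qg_{q,k}$ is represented by $\bmat{-s(k-1)}{s(k)}{s(k)}{-s(k+1)}$, and observing that for $k\in\{(q+1)/2,\ldots,q-1\}$ each of the four linear factors involved is nonnegative on $[0,1]$, I obtain
\[
 \tau(g_{q,k})\,h(x) = \frac{1}{(s(k-1)x+s(k))(s(k)x+s(k+1))},\ \ \tau(Qg_{q,k})\,h(x) = \frac{1}{(s(k)x+s(k-1))(s(k+1)x+s(k))}.
\]

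The next step is to partial-fraction each product. The only algebraic input required is the identity $s(k)^{2}-s(k-1)s(k+1)=1$, which is exactly the statement $\det g_{q,k}=1$ (and follows at once from $\sin(A+B)\sin(A-B)=\sin^{2}A-\sin^{2}B$). With this identity the decompositions take the telescoping form
\[
 \tau(g_{q,k})\,h \;=\; \phi_{k}-\phi_{k-1},\qquad \tau(Qg_{q,k})\,h \;=\; \psi_{k}-\psi_{k+1},
\]
where $\phi_{j}(x):=s(j)/(s(j)x+s(j+1))$ and $\psi_{j}(x):=s(j)/(s(j)x+s(j-1))$. Summing over $k\in\{(q+1)/2,\ldots,q-1\}$ collapses each sum to its two endpoints.

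Finally I would evaluate the surviving boundary terms. From $s(q)=0$ one reads off $\phi_{q-1}(x)=1/x$ and $\psi_{q}(x)=0$. The $q$-symmetry of the weights, via $\sin(\pi/2\pm\alpha)=\cos\alpha$, gives $s((q-1)/2)=s((q+1)/2)=\cos(\pi/(2q))/\sin(\pi/q)$, which forces $\phi_{(q-1)/2}(x)=\psi_{(q+1)/2}(x)=1/(x+1)$. Hence these two contributions cancel, leaving
\[
 \PF h(x) \;=\; \phi_{q-1}(x)-\phi_{(q-1)/2}(x)+\psi_{(q+1)/2}(x)-\psi_{q}(x) \;=\; \frac{1}{x} \;=\; h(x),
\]
and real analyticity of $h$ on $(0,1]$ is immediate. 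There is no real obstacle here: once the telescoping decomposition is spotted, everything reduces to the Chebyshev-type identity $s(k)^{2}-s(k-1)s(k+1)=1$ and the reflection symmetry $s((q-1)/2)=s((q+1)/2)$ at the central indices.
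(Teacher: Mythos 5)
Your computation is correct: the closed forms for $\tau(g_k)h$ and $\tau(Qg_k)h$, the telescoping decompositions resting on $s(k)^2-s(k-1)s(k+1)=1$ (which is exactly $\det g_k=1$), and the boundary evaluations $s(q)=0$, $s(q-1)=1$, $s((q-1)/2)=s((q+1)/2)$ all check out, so $\PF h=h$ follows as you describe, and analyticity of $x\mapsto 1/x$ on $(0,1]$ is indeed trivial. The paper, however, takes a genuinely different route: it introduces auxiliary operators $\TO_0 f=\sum_{k=1}^{q-1} f\circ g_k^{-1}$ and $\TO_1=\sum_{k=1}^{q-1}\tau(g_k)$ over the \emph{full} index range, shows by a telescoping sum of logarithms that $\log x$ is fixed by $\TO_0$, differentiates this identity to conclude $\TO_1(1/x)=1/x$, and only then folds $\TO_1$ into the half-range operator $\PF$ using the $Q$-symmetry (the $\tau(Q)$-invariance of $1/x$, the equivariance of $\tau$, and $Qg_kQ=g_{q-k}$). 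You work directly with $\PF$ itself, trade the differentiate-the-$\log$-eigenfunction trick for an explicit partial-fraction telescope, and replace the group-theoretic $Q$-folding by the scalar reflection identity $s((q-1)/2)=s((q+1)/2)$, which is the trace of that symmetry at the central index. The paper's route buys a conceptual explanation of where the density $1/x$ comes from (it is the derivative of the $\TO_0$-invariant function $\log x$) together with auxiliary structure that mirrors the origin of $\FM$ from the full family $g_1,\dots,g_{q-1}$; your route buys a shorter, self-contained verification that never leaves the operator actually under consideration and makes the cancellation mechanism completely explicit.
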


\begin{proof}
We define the two operators
\[
\TO_0, \TO_1 \colon \Fct( \R_{>0}, \C)\to \Fct( \R_{>0}, \C)
\]
by
\begin{align*}
\TO_0f(x) & \coloneqq \sum_{k=1}^{q-1} f\bigl( g_k^{-1}.x \bigr)
\intertext{and}
\TO_1f(x) & \coloneqq \sum_{k=1}^{q-1} \tau(g_k)f(x) =  \sum_{k=1}^{q-1}  \bigl(g_k^{-1}\bigr)'(x)  f\bigl( g_k^{-1}.x \bigr)\,.
\end{align*}
We set $f,g\colon\R_{>0}\to\R$, $f(x) \coloneqq \log x$ and $g(x)\coloneqq 1/x = f'(x)$. We recall from~\eqref{eq:gk} that, for $k\in\Z$,
\[
 g_k^{-1} = \bmat{ s(k) }{ s(k+1) }{ s(k-1) }{ s(k) }\,.
\]
Then
{\allowdisplaybreaks
\begin{align*}
 \TO_0f(x) & = \sum_{k=1}^{q-1} \log\left( \frac{  x s(k) + s(k+1) }{ x s(k-1) +  s(k) } \right)
 \\
 & = \sum_{k=1}^{q-1} \left[ \log\bigl( x s(k) + s(k+1)\bigr)  - \log\bigl( x s(k-1) +  s(k) \bigr)\right]
 \\
 & = \log\bigl( x s(q-1) + s(q) \bigr)  - \log\bigl( x s(0) + s(1) \bigr)
 \\
 & = \log\bigl( x s(1) \bigr) - \log\bigl( s(1) \bigr)
 \\
 & = \log x = f(x)\,.
\end{align*}
}
Thus, $f$ is an eigenfunction of~$\TO_0$ with eigenvalue~$1$. Further, for any $x\in\R_{>0}$,
\begin{align*}
 g(x) & = \frac1x = \frac{d}{dx}f(x) = \frac{d}{dx}\TO_0f(x)
   = \sum_{k=1}^{q-1} (g_k^{-1})'(x)\, g\bigl(g_k^{-1}.x\bigr)
   = \TO_1g(x)\,.
\end{align*}
Thus, $g$ is an eigenfunction with eigenvalue~$1$ of~$\TO_1$.
We now combine this property with the facts that $g$ is $\tau(Q)$-invariant (i.e., $\tau(Q)g=g$), that $\tau$ is an equivariance when restricted to the group elements and function spaces considered here, and that $Qg_kQ = g_{q-k}$ for all~$k\in\Z$. We obtain
\begin{align*}
\PF g & = \sum_{k=\frac{q+1}{2}}^{q-1} \tau(g_k) g + \tau(Qg_k)g
\\
& = \sum_{k=\frac{q+1}{2}}^{q-1}\tau(g_k)g + \sum_{\ell=\frac{q+1}{2}}^{q-1} \tau(Qg_\ell Q)g
\\
& = \sum_{k=\frac{q+1}{2}}^{q-1}\tau(g_k)g + \sum_{\ell=1}^{\frac{q-1}{2}} \tau(g_\ell)g
\\
& = \TO_1g = g\,.
\end{align*}
Noting that $h=g\vert_{(0,1]}$ completes the proof.
\end{proof}

We let $\mu$ denote the measure on~$(X,\mc B)$ which is absolutely continuous to the Lebesgue measure~$m$ with density being the $\PF$-eigenfunction~$h\colon x\mapsto 1/x$ from Proposition~\ref{prop:eigenfunctionh}, i.e.,
\begin{equation}
\label{eq:defmu}
\dd\mu = h\dd\leb\,.
\end{equation}
We further let
\[
\wh\FM \colon L^1(X,\mu) \to L^1(X,\mu)
\]
denote the transfer operator associated to~$(\FM,\mu)$. The following proposition discusses the relation between the Perron--Frobenius operator~$\PF$ and the transfer operator~$\wh\FM$, thereby providing a convenient explicit formula for~$\wh\FM$. Moreover, it shows that $\mu$ is indeed $\FM$-invariant.

\begin{prop}
\label{prop:invmeasure}
	The measure~$\mu$ is infinite, $\sigma$-finite and $\FM_q$-invariant.
	The transfer operator $\wh{\FM}\colon L^1(\mu)\to L^1(\mu)$ of $\FM$ with respect to $\mu$ satisfies
	\begin{equation}\label{eq:perron-transfer}
		\wh{\FM}(f) = \PF\left(f h\right)h^{-1}\,.
	\end{equation}
\end{prop}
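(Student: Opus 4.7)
The plan is to dispatch the three assertions in order, each reducing to a short calculation that leverages Proposition~\ref{prop:eigenfunctionh} together with the defining identity~\eqref{eq:defTO} of transfer operators.

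First, to establish that $\mu$ is infinite and $\sigma$-finite, I would compute directly
\begin{equation*}
\mu(X) = \int_0^1 \frac{1}{x}\dd\leb(x) = \infty,
\end{equation*}
and exhibit the countable exhaustion $X = \{0\} \cup \bigcup_{n\geq 1}(2^{-n},2^{-n+1}]$ with $\mu\bigl((2^{-n},2^{-n+1}]\bigr) = \log 2 < \infty$.

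Second, for the $\FM$-invariance of $\mu$, I would apply~\eqref{eq:defTO} with $\nu = \leb$ to the density $h$: for every $B\in\mc B$,
\begin{equation*}
\mu\bigl(\FM^{-1}(B)\bigr) = \int_{\FM^{-1}(B)} h\dd\leb = \int_B \PF(h)\dd\leb = \int_B h\dd\leb = \mu(B),
\end{equation*}
using $\PF h = h$ from Proposition~\ref{prop:eigenfunctionh}.

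Third, I would deduce~\eqref{eq:perron-transfer} by comparing the defining identities of $\wh\FM$ (with reference measure $\mu$) and $\PF$ (with reference measure $\leb$). For $f\in L^1(\mu)$ one has $fh\in L^1(\leb)$ since $\int|fh|\dd\leb = \int|f|\dd\mu <\infty$, so $\PF(fh)$ is well-defined. For every $B\in\mc B$,
\begin{equation*}
\int_B \wh\FM(f)\, h\dd\leb = \int_B \wh\FM(f)\dd\mu = \int_{\FM^{-1}(B)} f\dd\mu = \int_{\FM^{-1}(B)} fh\dd\leb = \int_B \PF(fh)\dd\leb,
\end{equation*}
so $\wh\FM(f)\,h = \PF(fh)$ holds $\leb$-a.e.\ on $X$. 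The only mild subtlety is the division by $h$: since $h$ is strictly positive on $(0,1]$ and $\{0\}$ is a $\mu$-null set, the identity $\wh\FM(f) = \PF(fh)h^{-1}$ is valid $\mu$-a.e., which suffices as an identity in $L^1(\mu)$. No genuine obstacle arises; the substantive input is Proposition~\ref{prop:eigenfunctionh}, and this proposition is essentially a change-of-measure bookkeeping argument.
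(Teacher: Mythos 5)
Your proposal is correct and takes essentially the same route as the paper: both arguments rest on $\PF h = h$ from Proposition~\ref{prop:eigenfunctionh} together with the change of measure $\dd\mu = h\dd\leb$, the only differences being cosmetic (you prove invariance first and obtain \eqref{eq:perron-transfer} by chaining the two defining identities, while the paper derives \eqref{eq:perron-transfer} by an explicit branch-wise change of variables and then reads off invariance from $\wh{\FM}(\one)=\one$). The one small caveat --- shared by the paper, which applies $\wh{\FM}$ to $\one\notin L^1(\mu)$ --- is that you invoke \eqref{eq:defTO} for $h\notin L^1(\leb)$; this is harmless, since the defining identity extends to nonnegative measurable functions by monotone convergence (equivalently, by the branch decomposition used in the paper's computation).
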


\begin{proof}
	It is clear that $\mu$ is $\sigma$-finite and infinite.
	Recall that $W_1=\Lambda$ is the alphabet generated by the inverse branches of $\FM$.
	The pre-image of a measurable set $B\subseteq \masterint$ is given by the $\mu$-almost disjoint union of $g.B$ for all $g\in W_1$.
	We have for all $f\in L^1(\mu)$,
	\begin{equation*}
		\int_{\FM^{-1}(B)} f\dd\mu
		= \sum_{g\in W_1}\int_{g.B} fh \dd \leb
		= \sum_{g\in W_1}\int_B (fh)(g.x) \lvert g'(x)\rvert \dd \leb(x)\,.
	\end{equation*}
	By the definition of $\PF$ this expression is equal to
	\begin{equation*}
		\int_B \PF(fh) \dd\leb = \int_B \PF(fh) h^{-1} \dd\mu\,.
	\end{equation*}
	It follows that $\wh{\FM}(f)=\PF(fh)h^{-1}$.
	Finally, as $h$ is an eigenfunction of eigenvalue $1$ of $\PF$, it holds that
	\begin{equation*}
		\wh{\FM}(\one) = h^{-1}\PF(h) = \one\,,
	\end{equation*}
	which implies that $\mu$ is $\FM$-invariant.
\end{proof}

\subsection{Tail probabilities}
\label{sec:tailprob}

For any subset $Y\subseteq X$ we let $\varphi_Y$ denote the \emph{first return time map} of~$Y$, that is,
\begin{equation}
\label{eq:firstreturn}
 \varphi_Y\colon Y\to \N\cup\{\infty\}\,,\quad \varphi_Y(x) \coloneqq \inf\{ n\in\N : \FM^n(x) \in Y\}\,,
\end{equation}
with the convention that $\inf\emptyset = \infty$.
The \emph{tail probabilities of~$(X,\mc B, \FM, \mu)$ associated to~$Y$} are the values $\mu(\varphi_Y > n)$
for~$n\in\N$ or, more precisely, the sequence of these values. This sequence is said to be \emph{regularly varying with exponent~$1$} if
\[
\mu(\varphi_Y > n) = \ell_Y(n) n^{-1}
\]
for a positive sequence~$(\ell_Y(n))_n$ satisfying
\[
\frac{\ell_Y( \lfloor \alpha  n \rfloor)}{\ell_Y(n)} \stackrel{n\to\infty}{\longrightarrow} 1
\]
for all~$\alpha > 0$. (In this case, $(\ell_Y(n))_n$ is called slowly varying.) The associated \emph{tail probabilities summation function} is
\[
w_Y(n)\coloneqq \sum_{j=1}^n \mu(\varphi_Y > j) \qquad\text{for $n\in\N$}.
\]
In this section we will show a rather uniform result for the tail probabilities of~$(X,\mc B, \FM, \mu)$, namely that for each compact subset~$C$ of~$X'=(0,1]$ we find a measurable set $Y(C)\subseteq (0,1]$ such that the tail probabilities $\bigl(\mu(\varphi_{Y(C)}>n)\bigr)_n$ are regularly varying with exponent~$1$ and the tail probabilities summation function~$w_{Y(C)}$ is asymptotic to~$\log n$ as $n\to\infty$. We emphasize that the asymptotics of the tail probabilities summation functions will indeed be found to be independent of~$C$, which is the uniformity needed for Theorem~\ref{thm:MT}. We refer to \cite{bingham_regular_1987} for further information on the concept of regularly varying, which is much more general than needed here.

Our proof for providing the sets~$Y(C)$ in Proposition~\ref{prop:tailprob} below is constructive, following an approach by \cite{zweimuller_ergodic_2000}. 
A crucial role will be played by the set
\begin{equation}\label{eq:defA}
 A \coloneqq (g_{q-1}^{-1}.1,1] = \left( \frac{1}{\cw+1}, 1 \right]\,,
\end{equation}
which has nicely structured propagation properties under~$\FM$. See Lemma~\ref{lem:Asweepout}. In particular, $A$ is a \emph{sweep-out set} for~$(\FM,\mu)$, i.e.,
\begin{equation}\label{eq:sweep}
\bigcup_{n\in\N_0} \FM^{-n}(A) \stackrel{\mu}{=} X\,,
\end{equation}
meaning that the sets on both sides equal up to a $\mu$-null set. As $(X,\mc B,\FM, \mu)$ is conservative and ergodic (see Corollary~\ref{cor:ergcons}), every set of positive and finite measure, such as~$A$, is a sweep-out set by~\cite[Lemma~2.4.3]{MR3585883}. 
However, for the set~$A$ a hands-on proof of the sweep-out property is straightforward and explicitly provides the $\mu$-null set in the equality~\eqref{eq:sweep}, for which reason we include it into the proof of Lemma~\ref{lem:Asweepout}.

\begin{lemma}\label{lem:Asweepout}
For all $n\in\N$ we have
\begin{equation}\label{eq:bulkinsweepout}
 \bigcup_{k=0}^{n-1} \FM^{-k}(A) = (g_{q-1}^{-n}.1,1] = \left( \frac{1}{n\cw+1}, 1\right]\,.
\end{equation}
In particular,
\begin{equation}\label{eq:infbulk_essall}
 \bigcup_{k=0}^{\infty} \FM^{-k}(A) = (0,1]\,,
\end{equation}
and the set~$A=(g_{q-1}^{-1}.1,1]$ is a sweep-out set for~$(\FM,\mu)$.
\end{lemma}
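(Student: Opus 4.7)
The plan is to prove the identity
\[
\bigcup_{k=0}^{n-1}\FM^{-k}(A) = \left(\frac{1}{n\cw+1},1\right]
\]
by induction on $n\in\N$, after which the equality $g_{q-1}^{-n}.1 = 1/(n\cw+1)$ will be a direct computation. For the latter I would evaluate $s(q-1)=1$, $s(q-2)=\cw$, $s(q)=0$ in \eqref{eq:gk} to obtain the unipotent representative $g_{q-1}^{-1}=\bigl[\begin{smallmatrix}1&0\\\cw&1\end{smallmatrix}\bigr]$, whose $n$-th power has lower-left entry $n\cw$, giving both the numerical value of $g_{q-1}^{-n}.1$ and the fact that the domain $[0,g_{q-1}^{-1}.1] = [0,1/(\cw+1)]$ of the indifferent branch of $\FM$ is precisely $X\setminus A$.

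The base case $n=1$ is just the definition~\eqref{eq:defA}. For the inductive step, using $\FM^{-k}=\FM^{-1}\circ \FM^{-(k-1)}$ I would rewrite
\[
\bigcup_{k=0}^{n}\FM^{-k}(A) \;=\; A\,\cup\,\FM^{-1}\!\left(\bigcup_{k=0}^{n-1}\FM^{-k}(A)\right) \;=\; A\cup \FM^{-1}(I_n),
\]
where $I_n\coloneqq (1/(n\cw+1),1]$ is the right-hand side of the induction hypothesis. The key observation is that the preimage $\FM^{-1}(I_n)$ decomposes according to the branches of $\FM$: on the indifferent branch, which has domain $g_{q-1}^{-1}.[0,1]=[0,1/(\cw+1)]=X\setminus A$, the map $\FM$ coincides with $g_{q-1}$, so
\[
\FM^{-1}(I_n)\cap (X\setminus A) \;=\; g_{q-1}^{-1}(I_n) \;=\; \left(\tfrac{1}{(n+1)\cw+1},\,\tfrac{1}{\cw+1}\right],
\]
using that $g_{q-1}^{-1}$ is an increasing Möbius map; while on the complementary branches the preimage $\FM^{-1}(I_n)\cap A$ is a (possibly complicated) subset of $A$ which disappears upon taking the union with $A$. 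Assembling these two contributions yields
\[
A\cup\FM^{-1}(I_n) \;=\; A\cup g_{q-1}^{-1}(I_n) \;=\; \left(\tfrac{1}{(n+1)\cw+1},1\right] \;=\; I_{n+1},
\]
completing the induction and proving \eqref{eq:bulkinsweepout}.

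For \eqref{eq:infbulk_essall} I would simply take the union over $n$: since $1/(n\cw+1)\to 0$, the nested intervals $I_n$ exhaust $(0,1]$. Finally, to conclude that $A$ is a sweep-out set for $(\FM,\mu)$, I would note that $X\setminus (0,1]=\{0\}$ is a Lebesgue null set, hence a $\mu$-null set by the definition~\eqref{eq:defmu} of $\mu$, so \eqref{eq:infbulk_essall} upgrades to the $\mu$-a.e.\ equality \eqref{eq:sweep}. The only delicate point is the verification that the part of $\FM^{-1}(I_n)$ living over $A$ does not push outside $I_{n+1}$, but this is immediate because that part is contained in $A\subseteq I_{n+1}$; I do not expect any genuine obstacle in this proof.
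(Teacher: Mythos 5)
Your proposal is correct and follows essentially the same route as the paper: an induction on $n$ whose key step is that, on the complement of $A$ (the domain of the indifferent branch), $\FM$ coincides with $g_{q-1}$ while all other inverse branches land inside $A$, so that taking the union with $A$ reduces $\FM^{-1}(I_n)$ to $g_{q-1}^{-1}.I_n$, together with the explicit computation $g_{q-1}^{-n}=\bigl[\begin{smallmatrix}1&0\\ n\cw&1\end{smallmatrix}\bigr]$ and the observation that $\{0\}$ is $\mu$-null. No gaps; your branch-domain phrasing even handles the boundary point $1/(\cw+1)$ cleanly.
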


\begin{proof}
We start by showing the first statement by induction.
For $n=1$, \eqref{eq:bulkinsweepout} is immediate by the choice of~$A$.
For $n>1$ we first note that for each $B\subseteq (0,1]$ we have
\[
 \FM^{-1}(B) = \bigcup_{k=(q+1)/2}^{q-1} g_k^{-1}.B \cup (Qg_k)^{-1}.B\,,
\]
where, because of $A=(g_{q-1}^{-1}.1,1]$,
\[
 g_k^{-1}.B \subseteq A \qquad\text{for $k=\frac{q+1}{2},\ldots, q-2$}
\]
as well as
\[
 (Qg_k)^{-1}.B \subseteq A \qquad\text{for $k=\frac{q+1}{2},\ldots, q-1$}\,.
\]
Thus, if $B\subseteq A$ then
\begin{equation}\label{eq:satpreim}
 \FM^{-1}(B) \cup A = g_{q-1}^{-1}.B \cup A\,.
\end{equation}
We suppose now that \eqref{eq:bulkinsweepout} is valid for some~$n\in\N$.
Then, taking advantage of~\eqref{eq:satpreim}, we obtain
\begin{align*}
 \bigcup_{k=0}^{n} \FM^{-k}(A) & = \FM^{-1}\left( \bigcup_{k=0}^{n-1}\FM^{-k}(A) \right) \cup A
 \\
 & = \FM^{-1}\bigl( (g_{q-1}^{-n}.1,1] \bigr) \cup A
 \\
 & = g_{q-1}^{-1}.\bigl( (g_{q-1}^{-n}.1,1] \bigr) \cup (g_{q-1}^{-1}.1,1]
 \\
 & = (g_{q-1}^{-(n+1)}.1,1]\,,
\end{align*}
which establishes \eqref{eq:bulkinsweepout} for~$n+1$. Hence \eqref{eq:bulkinsweepout} is indeed valid for all~$n\in\N$.
For the other two statements of the lemma we use that
\[
 g_{q-1}^{-n}.1 = \frac{1}{n\cw + 1} \qquad\text{for $n\in\N$}
\]
and hence
\[
 \bigcup_{k=0}^\infty \FM^{-k}(A) = (0,1]\,.
\]
Because $\{0\}$ is a $\mu$-null set, it follows immediately that $A$ is a sweep-out set for $(\FM,\mu)$.
\end{proof}

For any subset $Y\subseteq X$, we let $\tilde\varphi_Y$ denote the \emph{first hitting time map} of~$Y$, that is,
\begin{equation}
\label{eq:firsthit}
 \tilde\varphi_Y\colon X\to\N\cup\{\infty\}\,,\quad \tilde\varphi_Y(x) \coloneqq \inf\{ n\in\N : \FM^n(x) \in Y\}\,,
\end{equation}
with the convention that $\inf\emptyset = \infty$. We emphasize that the domains of $\varphi_Y$ (see~\eqref{eq:firstreturn}) and $\tilde\varphi_Y$ differ. Restricted to~$Y$, both maps are identical. 
The sweep-out property for $(\FM,\mu)$ yields that the first hitting time of~$Y$ is almost surely finite if~$Y$ has positive and finite measure.
The first hitting time map will simplify the proof of Proposition~\ref{prop:tailprob} below. We emphasize that the proof of this proposition is indeed constructive.

For any positive sequences $(a_n)_n, (b_n)_n$ we write $a_n\sim b_n$ if they are asymptotic for $n\to\infty$, i.e., if
\[
\lim_{n\to\infty} \frac{a_n}{b_n} = 1\,.
\]

\begin{prop}
\label{prop:tailprob}
The first return time map $\varphi_A\colon A\to \N\cup\{\infty\}$ satisfies
\[
 \mu(\varphi_A>n) \sim n^{-1}\qquad \text{as $n\to\infty$}\,.
\]
Moreover,  for every compact set $C\subseteq (0,1]$, there exists a measurable set $Y(C)\subseteq (0,1]$ that contains $C$ and whose first return time map~$\varphi_{Y(C)}$ satisfies
\[
 \mu(\varphi_{Y(C)}>n) \sim n^{-1} \qquad \text{as $n\to\infty$}\,.
\]
\end{prop}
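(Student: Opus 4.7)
The plan is to derive an exact telescoping identity $\mu(\varphi_A > n) = \mu(Z_n)$ for suitable cells $Z_m$ associated with iterates of the indifferent branch, from which the asymptotic drops out of an elementary logarithm; the same argument then extends almost verbatim to the sets $Y(C)$. Set $B_0 \coloneqq [0, 1/(\cw+1)]$, which is the interval on which $\FM$ acts as $g_{q-1}$ (so $X = B_0 \cup A$ up to a Lebesgue-null set). For $m \in \N_0$ define
\[
Z_m \coloneqq g_{q-1}^{-m}(A) = \left(\frac{1}{(m+1)\cw + 1},\, \frac{1}{m\cw + 1}\right],
\]
so that $Z_0 = A$ and the $Z_m$ partition $(0,1]$ (directly from the explicit endpoints, compatible with the computation in Lemma~\ref{lem:Asweepout}). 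Two elementary observations drive everything: (i) $\FM \colon Z_m \to Z_{m-1}$ is a bijection for $m \geq 1$, since $Z_m \subseteq B_0$ and $\FM|_{B_0} = g_{q-1}$; consequently (ii) $\FM^{-1}(Z_{m-1}) = Z_m \cup \bigl(A \cap \FM^{-1}(Z_{m-1})\bigr)$ is a disjoint decomposition (the $B_0$-part is exactly $g_{q-1}^{-1}(Z_{m-1}) = Z_m$), and the $\FM$-invariance of $\mu$ from Proposition~\ref{prop:invmeasure} yields
\[
\mu\bigl(A \cap \FM^{-1}(Z_{m-1})\bigr) = \mu(Z_{m-1}) - \mu(Z_m).
\]

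For the first claim, I identify the return-time level sets: for $x \in A$, the requirement $\FM(x), \ldots, \FM^n(x) \in B_0$ forces $\FM^{n+1}(x) = g_{q-1}^n(\FM(x))$, so that $\varphi_A(x) = n+1$ is equivalent to $\FM(x) \in Z_n$. Telescoping then gives
\[
\mu(\varphi_A > n) = \sum_{k > n}\bigl(\mu(Z_{k-1}) - \mu(Z_k)\bigr) = \mu(Z_n),
\]
where the telescope closes because the explicit integration $\mu(Z_m) = \log\bigl(((m+1)\cw+1)/(m\cw+1)\bigr)$ of the density $h(x) = 1/x$ over $Z_m$ gives $\mu(Z_k) \to 0$. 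The asymptotic $\mu(Z_n) = \log(1 + \cw/(n\cw+1)) \sim n^{-1}$ is immediate.

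For the second claim, given a compact $C \subseteq (0,1]$ I pick $N \in \N_0$ with $\min C > 1/((N+1)\cw+1)$ and set
\[
Y(C) \coloneqq \bigcup_{m=0}^N Z_m = \left(\frac{1}{(N+1)\cw+1},\, 1\right] \supseteq C.
\]
For $x \in Z_m$ with $1 \leq m \leq N$ one has $\FM(x) = g_{q-1}(x) \in Z_{m-1} \subseteq Y(C)$, so $\varphi_{Y(C)}(x) = 1$; hence orbits can satisfy $\varphi_{Y(C)} > n$ for $n \geq 1$ only when starting in $A = Z_0$. The same bookkeeping then identifies $\{x \in A : \varphi_{Y(C)}(x) > n\} = A \cap \FM^{-1}\bigl(\bigcup_{m \geq N+n} Z_m\bigr)$, and the analogous telescope (summing $\mu(Z_m) - \mu(Z_{m+1})$ over $m \geq N+n$) gives $\mu(\varphi_{Y(C)} > n) = \mu(Z_{N+n}) \sim n^{-1}$. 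The main technical point is combinatorial: tracking which cell $Z_m$ each iterate lies in, in order to justify the level-set identification and the disjointness in the decomposition of $\FM^{-1}(Z_{m-1})$. The key advantage of this setup is avoiding what would otherwise be a branch-by-branch computation over the $q-1$ branches of $\FM|_A$ (ultimately reducing to an identity of the form $\sum_k 1/(s(k)s(k\pm 1)) = \cw$); instead, the $\mu$-invariance on the single branch $g_{q-1}$ does all the work.
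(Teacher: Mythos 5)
Your proof is correct, and it arrives at the paper's exact formula $\mu(\varphi_{Y(C)}>n)=\log\bigl(1+\cw/((N+n)\cw+1)\bigr)$ by a somewhat different route. The paper also takes $Y(C)=\bigcup_{k=0}^N\FM^{-k}(A)$, which by Lemma~\ref{lem:Asweepout} is the same interval $\bigl(1/((N+1)\cw+1),1\bigr]$ as your $\bigcup_{m=0}^N Z_m$; but it then brings in the first \emph{hitting} time map $\wt\varphi_Y$ and proves, by iterating the $\FM$-invariance of $\mu$ and using the sweep-out property, the identity $\mu\bigl(Y^\complement\cap\{\wt\varphi_Y=n\}\bigr)=\mu(\varphi_Y>n)$, after which Lemma~\ref{lem:Asweepout} identifies $Y^\complement\cap\{\wt\varphi_Y=n\}$ as the interval $\bigl(g_{q-1}^{-(N+n+1)}.1,\,g_{q-1}^{-(N+n)}.1\bigr]$, i.e.\ your $Z_{N+n}$. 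You bypass hitting times entirely: you read the return-time level sets off the cell structure ($\{\varphi_A=n+1\}=A\cap\FM^{-1}(Z_n)$ up to a $\mu$-null set), apply invariance once per cell to get $\mu\bigl(A\cap\FM^{-1}(Z_{m-1})\bigr)=\mu(Z_{m-1})-\mu(Z_m)$ (legitimate since each $\mu(Z_m)$ is finite), and telescope. In substance the two arguments unroll the same cancellation---the paper's induction on $\wt\varphi_Y$ is your telescope in disguise---and the ingredients (the explicit parabolic branch $g_{q-1}$, the endpoints $g_{q-1}^{-m}.1=1/(m\cw+1)$, and $\FM$-invariance from Proposition~\ref{prop:invmeasure}) coincide; your version is more hands-on in that it yields the exact identity $\mu(\varphi_{Y(C)}>n)=\mu(Z_{N+n})$ without introducing $\wt\varphi_Y$, while the paper's hitting-time identity is the standard, reusable device from infinite ergodic theory. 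One point you should make explicit: the level-set identification and the disjoint decomposition of $\FM^{-1}(Z_{m-1})$ hold only up to the countable (hence $\mu$-null) set of points whose orbit meets a branch endpoint or the fixed point $0$ (where $\varphi_A=\infty$); this is also what justifies writing $\mu(\varphi_A>n)=\sum_{k>n}\mu(\varphi_A=k)$ with no mass escaping to $\{\varphi_A=\infty\}$, a step the paper handles implicitly through the sweep-out property.
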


\begin{proof}
In what follows we will provide a construction of the measurable sets~$Y(C)$ in the second statement. These sets will be using the set~$A$ in such a way that the proof will show that the first statement follows immediately from a special case in the proof of the second statement. For this reason, we will discuss the second statement only.

Let $C$ be a compact subset of~$(0,1]$. By Lemma~\ref{lem:Asweepout} we find $N\in\N_0$ such that
\begin{equation}\label{eq:deftildeA}
 C \subseteq \bigcup_{k=0}^N \FM^{-k}(A) \eqqcolon Y(C)\,.
\end{equation}
To simplify notation, let $Y\coloneqq Y(C)$.
We reformulate the mass of $\{\varphi_{Y}>n\}$ by using the $\FM$-invariance of~$\mu$ and the dis\-joint\-ness of level sets of the first return time map for different times as follows:
\begin{align*}
\mu\left( Y^\complement \cap  \bigl\{ \wt\varphi_{Y} = n\bigr\} \right)
& = \mu\left( \FM^{-1} \left( Y^\complement \cap \bigl\{ \wt\varphi_{Y} = n\bigr\} \right)  \right)
\\
& = \mu\bigl( \varphi_{Y} = n+1 \bigr) + \mu\left( Y^\complement \cap \bigl\{ \wt\varphi_{Y} = n+1 \bigr\} \right)
\\
& = \mu\bigl( \varphi_{Y} = n+1 \bigr) + \mu\bigl( \varphi_{Y} = n+2 \bigr)
\\
& \hphantom{\mu\bigl( \varphi_{Y} = n+1 \bigr)} + \mu\left( Y^\complement \cap \bigl\{ \wt\varphi_{Y} = n+2 \bigr\} \right)
\\
& = \sum_{k=1}^\infty \mu\bigl( \varphi_{Y} = n+k \bigr) = \mu\bigl( \varphi_{Y} > n\bigr)\,,
\end{align*}
where we used the sweep-out property of~$A$ and induction in the penultimate equality.
Further we have, using Lemma~\ref{lem:Asweepout} for the last equality,
\begin{align*}
 Y^\complement \cap  \bigl\{ \wt\varphi_{Y} = n\bigr\} & = \FM^{-n}(Y) \setminus \bigcup_{j=0}^{n-1} \FM^{-j}(Y)
 \\
 & = \left( \bigcup_{j=0}^n \FM^{-n}(Y) \right) \setminus \left(\bigcup_{j=0}^{n-1} \FM^{-j}(Y) \right)
 \\
 & = \left( \bigcup_{j=0}^n \bigcup_{k=0}^N \FM^{-(j+k)}(A) \right) \setminus \left( \bigcup_{j=0}^{n-1} \bigcup_{k=0}^N \FM^{-(j+k)}(A) \right)
 \\
 & = \bigcup_{\ell=0}^{N+n} \FM^{-\ell}(A) \setminus \bigcup_{m=0}^{N+n-1} \FM^{-m}(A)
 \\
 & = \bigl( g_{q-1}^{-(N+n+1)}.1, g_{q-1}^{-(N+n)}.1 \bigr]\,.
\end{align*}
Combining this set equality with the reformulation of the mass above, and recalling the definition of $\mu$ in~\eqref{eq:defmu}, we obtain
\[
 \mu\bigl(\varphi_{Y} > n\bigr) = \mu\bigl(\bigl( g_{q-1}^{-(N+n+1)}.1, g_{q-1}^{-(N+n)}.1 \bigr]\bigr) = \log\left( 1 + \frac{\cw}{(N+n)\cw + 1} \right)\,.
\]
Using the bounds $x/(1+x) \leq \log(1+x) \leq x$, valid for $x>-1$, we conclude that
\[
 \frac{\cw}{(N+n+1)\cw + 1} \leq \mu\bigl( \varphi_{Y} > n \bigr) \leq \frac{\cw}{(N+n)\cw +1}\,.
\]
Thus,
\[
 \mu\bigl( \varphi_{Y} > n \bigr) \sim n^{-1} \qquad \text{as $n\to\infty$},
\]
which is the second statement of the proposition.
Considering $N=0$ in~\eqref{eq:deftildeA} (and picking, e.\,g., $C=\emptyset$), we obtain the first statement as well.
\end{proof}

In the following proposition we provide asymptotics for the summation functions of the tail probabilities in Proposition~\ref{prop:tailprob}. As all of these tail probabilities enjoy the same asymptotics for $n\to\infty$, i.e., independent of the specific set~$Y(C)$, we omit the subscript $Y(C)$.

\begin{prop}
\label{prop:tailsum}
For the tail probabilities summation functions associated to the sets~$Y(C)$ in Proposition~\ref{prop:tailprob} we have
\[
w(n) \sim \log n\qquad\text{as $n\to\infty$}.
\]
\end{prop}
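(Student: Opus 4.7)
The plan is to derive the claim directly from the sharp tail estimate already obtained in the course of proving Proposition~\ref{prop:tailprob}. Recall that the proof of that proposition yields, for the specific set $Y(C)$ and for every $n\in\N$, the explicit two-sided bound
\[
 \frac{\cw}{(N+n+1)\cw + 1} \leq \mu\bigl( \varphi_{Y(C)} > n \bigr) \leq \frac{\cw}{(N+n)\cw +1}\,,
\]
where $N = N(C)\in\N_0$ is the integer chosen in~\eqref{eq:deftildeA}. Summing each inequality over $j=1,\dots,n$, the outer expressions become (shifted and rescaled) partial sums of the harmonic series, each of which equals $\log n + O(1)$ by the standard asymptotic $\sum_{j=1}^n j^{-1} = \log n + \gamma + O(n^{-1})$. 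Thus both the upper and the lower bound on $w(n)$ have the form $\log n + c_\pm + o(1)$ for constants $c_\pm$ depending only on $N$ and $\cw$.

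From here the conclusion is immediate: dividing by $\log n$ and letting $n\to\infty$, the constants $c_\pm$ contribute $0$ to the limit, and we obtain $w(n)/\log n \to 1$, i.e.\ $w(n) \sim \log n$. There is no genuine obstacle; the argument is a one-line summation based on the quantitative form of the tail estimate. As an alternative, one could argue abstractly from $\mu(\varphi_{Y(C)} > j) \sim j^{-1}$ by sandwiching each term between $(1\pm\varepsilon)j^{-1}$ for $j$ large, summing and using $\sum_{j=1}^n j^{-1}\sim\log n$, and then letting $\varepsilon\to 0$; but the quantitative version via the bounds above is shorter and uniform in $C$.
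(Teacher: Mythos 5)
Your argument is correct and in essence the same as the paper's: the paper deduces $w(n)\sim\sum_{j=1}^n j^{-1}\sim\log n$ directly from the asymptotic $\mu(\varphi_{Y(C)}>j)\sim j^{-1}$ of Proposition~\ref{prop:tailprob}, which is exactly the ``abstract'' alternative you sketch at the end. Your primary route, summing the explicit two-sided bounds from the proof of Proposition~\ref{prop:tailprob}, is a harmless quantitative sharpening (it even gives $w(n)=\log n+O(1)$) but does not change the substance of the argument.
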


\begin{proof}
From Proposition~\ref{prop:tailprob} we obtain
\[
w(n) = \sum_{j=1}^n \mu(\varphi_{Y(C)} > j) \sim \sum_{j=1}^n \frac1j \qquad\text{as $n\to\infty$}.
\]
Observing that
\[
 \sum_{j=1}^n \frac1j \sim \log n \qquad\text{as $n\to\infty$}
\]
completes the proof.
\end{proof}

\section{Proof of Theorem~\ref{thm:dyn_intervals}}
\label{sec:proofthm11}
Throughout this proof section we consider odd $q\geq 5$. In particular, $q\not=3$.
    We consider all measures as operators on the space of continuous functions.
    Let $[\alpha, \beta]$ be any compact subinterval of~$(0,1]$ and let $g\in C(X;\R)$ be any continuous function on~$X=[0,1]$.
    Then $g$ is Riemann-integrable as $X$ is compact and $g$ continuous. 
    The Farey map~$\FM$ is a topologically mixing AFN-map by Propositions~\ref{prop:Fareytopmix} and~\ref{prop:FareyAFN}.
    The set of indifferent fixed points of~$(X,\FM)$ is~$\{0\}$, and hence $X' = (0,1]$. See~\eqref{eq:complindfixed}.
    The tail probabilities of~$(X,\mc B, \FM,\mu)$ are regularly varying with exponent~$1$ and the tail probabilities summation functions are asymptotic to~$\log(n)$ by Propositions~\ref{prop:tailprob} and~\ref{prop:tailsum}.
    Hence \cite[Theorem~1.1]{melbourne_operator_2012} (see Theorem~\ref{thm:MT}) provides the convergence
    \[
    \log(n) \wh\FM^{n}\left(\frac{g}{h}\right)\quad \stackrel{n\to\infty}{\longrightarrow}\quad \int_X \frac{g}{h} \dd\mu
    \]
    and establishes that this convergence is uniform on~$[\alpha,\beta]$. 
    Applying the defining relation of the transfer operator~$\wh\FM$ from~\eqref{eq:defTO}, Lebesgue's dominated convergence theorem and the relation $\dd\mu=h\dd\leb$ (in this order), we obtain that
    \begin{align*}
        \lim_{n\to\infty} \log(n)  m\vert_{\FM^{-n}([\alpha,\beta])}(g)
        &= \lim_{n\to\infty}
		\int\limits_{\FM^{-n}([\alpha,\beta])} \log(n) g \dd\leb
		\\
		&=
		\lim_{n\to\infty} \int\limits_{[\alpha,\beta]} \log(n)\wh{\FM}^n\left(\frac{g}{h}\right) \dd\mu
		\\
		& =  \mu([\alpha,\beta]) \int_X \frac{g}{h} \dd\mu =  \mu([\alpha,\beta]) \int_X g \dd\leb 
		\\
		& =  \mu([\alpha,\beta]) \leb(g)\,.
	\end{align*}
	Observing that $\mu([\alpha,\beta]) = \log(\beta/\alpha)$ now completes the proof.
\qed

\section{Proof of Theorems~\ref{thm:jac_weight_lim_intro} and~\ref{thm:weighted_distr}}\label{sec:proofweight}

In this final section we provide detailed proofs for Theorems~\ref{thm:jac_weight_lim_intro} and~\ref{thm:weighted_distr}.
As mentioned in the Introduction (Section~\ref{sec:intro}), Theorem~\ref{thm:weighted_distr} is essentially only a specialization of Theorem~\ref{thm:jac_weight_lim_intro} to reduced fractions, for which reason we start by presenting this argumentation.
The rather longish proof of Theorem~\ref{thm:jac_weight_lim_intro} follows further below. Throughout this section we consider all odd $q\geq 3$.

\begin{lemma}
	The cusp points $\Gamma.\infty$ and the reduced fractions (understood as equivalence class) in $\Z[\cw]\times \Z[\cw]$ are in bijection.
\end{lemma}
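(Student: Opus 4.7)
The plan is to exhibit the claimed bijection explicitly via the evaluation map
\[
 \Phi \colon \{\text{reduced fractions}\}/\!\sim\ \to\ \Gamma.\infty\,, \qquad [(a,c)] \mapsto \frac{a}{c}\,,
\]
where $\sim$ identifies $(a,c)$ with $(-a,-c)$. First I would verify well-definedness: the quotient $a/c$ is invariant under the sign change, and by definition of reduced fraction there exists $g = \textbmat{a}{\ast}{c}{\ast} \in \Gamma_q$, whence $a/c = g.\infty \in \Gamma_q.\infty$, so the image does lie in $\Gamma_q.\infty$.

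For surjectivity I would simply observe that any cusp point $x \in \Gamma_q.\infty$ can be written as $x = g.\infty$ for some $g = \textbmat{a}{b}{c}{d} \in \Gamma_q$; then $(a,c)$ is a reduced fraction by definition, and $\Phi([(a,c)]) = a/c = x$.

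The core of the proof is injectivity, which reduces to understanding the stabilizer of $\infty$ in $\Gamma_q$. Suppose $(a,c)$ and $(a',c')$ are reduced fractions with $a/c = a'/c'$, realized by matrices $g, g' \in \Gamma_q$ with first columns $(a,c)^T$ and $(a',c')^T$ respectively. Then $(g')^{-1}g$ fixes $\infty \in P^1(\R)$. The stabilizer of $\infty$ in $\Gamma_q$ is generated by the parabolic element $\ME_q = \textbmat{1}{\cw_q}{0}{1}$; here I would invoke the standard fact that $\Gamma_q$ has a single cusp of width $\cw_q$ represented by $\infty$, so that $\Stab_{\Gamma_q}(\infty) = \langle \ME_q \rangle$ in $\PSL_2(\R)$, i.e., in matrix form equals $\bigl\{\pm \textmat{1}{n\cw_q}{0}{1} : n \in \Z\bigr\}$. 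Consequently $g = g' \cdot (\pm \ME_q^n)$ for some $n \in \Z$ and some sign, so the first column of $g$ equals $\pm$ the first column of $g'$. Thus $(a,c) \sim (a',c')$, establishing injectivity.

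The main obstacle is the identification of the stabilizer of $\infty$ in $\Gamma_q$, which relies on the fact that $\Gamma_q$ is cofinite with a single cusp at $\infty$ of width $\cw_q$; this is a standard result about Hecke triangle groups and can be cited without elaboration. Everything else is a direct manipulation of $2 \times 2$ matrices.
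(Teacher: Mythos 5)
Your proposal is correct and follows essentially the same route as the paper: both arguments reduce the uniqueness/injectivity question to the identification $\Stab_{\Gamma_q}(\infty)=\langle \ME_q\rangle$, with the paper citing cyclicity of stabilizers in Fuchsian groups via the Poincar\'e theorem and you citing the standard single-cusp-of-width-$\cw_q$ fact, which amounts to the same input. Your explicit handling of the $\pm$ sign ambiguity of matrix representatives is a slightly more careful bookkeeping of the equivalence $(a,c)\sim(-a,-c)$, but it is not a different argument.
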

\begin{proof}
	Clearly, each point in $\Gamma.\infty$ can be represented by a reduced fraction.
	We show that the representation is unique.
	Let $(a,c)$ be reduced and let $g=\textbmat{a}{\ast}{c}{\ast}$. Suppose that $h\in\Gamma$ is such that $h.\infty = a/c = g.\infty$. Then $g^{-1}h.\infty = \infty$ which means that $g^{-1}h$ is an element of the stabilizer $\Stab_{\Gamma}(\infty)$ of $\infty$ in $\Gamma$.
	The generating element $T=\begin{bsmallmatrix}1 & \cw\\ 0 & 1\end{bsmallmatrix}$ also stabilizes $\infty$.
	All stabilizers in a Fuchsian group are cyclic, and hence the Poincaré Theorem on fundamental polyhedrons yields that
	\begin{equation*}
		\Stab_{\Gamma}(\infty) = \langle T\rangle\,.
	\end{equation*}
	Therefore, there exists some $n\in\N$ such that
	\begin{equation*}
		h = gT^n =
		\begin{bmatrix}
		a & \ast\\
		c & \ast
		\end{bmatrix}
		\begin{bmatrix}
			1 & n\cw\\
			0 & 1
		\end{bmatrix}
		= \begin{bmatrix}
			a & \ast \\
			c & \ast
		\end{bmatrix}\,.
	\end{equation*}
	Hence, the reduced fraction representing $a/c$ is unique.
\end{proof}

\begin{lemma}
\label{lem:Qredfrac}
	The pair $(a,c)\in\Z[\cw]\times \Z[\cw]$ is a reduced fraction if and only if there exists $g\in \langle\Gamma, Q\rangle$ such that
	\begin{equation*}
		g = \begin{bmatrix}
			a & \ast\\ c & \ast
		\end{bmatrix}.
	\end{equation*}
\end{lemma}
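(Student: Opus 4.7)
The forward direction is immediate because $\Gamma \subseteq \langle \Gamma, Q\rangle$, so if $(a,c)$ is reduced then the witnessing element of $\Gamma$ is also an element of $\langle \Gamma, Q\rangle$. All the work is in the reverse direction.

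The plan for the reverse direction rests on two structural observations. First, since $Q = \textbmat{0}{1}{1}{0}$ satisfies $Q^2 = \id$ in $\PGL_2(\R)$ and since $Q\Gamma Q = \Gamma$ (the outer symmetry property stated in Section~\ref{sec:mainresults}), the subgroup $\langle \Gamma, Q\rangle$ decomposes as the disjoint union $\Gamma \sqcup \Gamma Q$. Second, the generator $S = \textbmat{0}{1}{-1}{0}$ of $\Gamma$ has the effect, when multiplied on the right, of swapping the two columns of a matrix up to a sign; more precisely, $\textbmat{\alpha}{a}{\beta}{c} S^{-1} = \textbmat{a}{-\alpha}{c}{-\beta}$.

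With these two observations the argument is straightforward. Suppose $g\in\langle\Gamma,Q\rangle$ has first column $(a,c)$. If $g\in\Gamma$, there is nothing to show. Otherwise $g = \gamma Q$ for some $\gamma\in\Gamma$, and a direct computation gives
\[
g = \gamma Q = \begin{bmatrix}\alpha & a\\ \beta & c\end{bmatrix}\begin{bmatrix}0 & 1\\ 1 & 0\end{bmatrix}\qquad\text{when}\qquad \gamma = \begin{bmatrix}\alpha & a\\ \beta & c\end{bmatrix}\!,
\]
so the first column of $g$ agrees with the second column of $\gamma$. Applying the column-swap via $S^{-1}\in\Gamma$ yields
\[
\gamma S^{-1} = \begin{bmatrix}a & -\alpha\\ c & -\beta\end{bmatrix}\in\Gamma,
\]
which is an element of $\Gamma$ with first column exactly $(a,c)$. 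Hence $(a,c)$ is a reduced fraction in the original sense.

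The only conceptual step is the coset decomposition $\langle\Gamma,Q\rangle = \Gamma\sqcup \Gamma Q$, and I do not expect it to be an obstacle: it is an immediate consequence of $Q^2=\id$ together with the normalization $Q\Gamma Q=\Gamma$, both of which are already recorded in Section~\ref{sec:mainresults}. Once this decomposition is in hand, the remaining matrix manipulations are purely computational. (If one wishes to avoid even the sign ambiguity, one can work with the identification $(a,c)\sim(-a,-c)$ used throughout the paper; alternatively, multiplying by $S$ instead of $S^{-1}$ produces first column $(-a,-c)$, which represents the same reduced fraction.)
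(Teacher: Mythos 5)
Your proof is correct and follows essentially the same route as the paper: both use the decomposition $\langle\Gamma,Q\rangle=\Gamma\cup\Gamma Q$ (equivalently $\Gamma\cup Q\Gamma$, since $Q$ normalizes $\Gamma$) and then multiply by $S$ (note $S^{-1}=S$ in $\PSL_2(\R)$) to swap the columns and land back in $\Gamma$ with first column $\pm(a,c)$. The sign issue you flag is handled exactly as you suggest, via the identification $(a,c)\sim(-a,-c)$.
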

\begin{proof}
	Since $Q$ is an outer symmetry, we have
	\begin{equation*}
		\langle\Gamma, Q\rangle = \Gamma \cup Q\Gamma\,.
	\end{equation*}
	In particular, if $(a,c)$ is reduced then there is an element in $\Gamma\subseteq\langle\Gamma, Q\rangle$ with a matrix representation of the form~$\textbmat{a}{\ast}{c}{\ast}$.
	For the other direction, let $g=\textbmat{a}{b}{c}{d}\in \langle\Gamma, Q\rangle$. 
	If $g\in\Gamma$, then $(a,c)$ is reduced by definition.
	Suppose that $g = Qh$ for some $h\in\Gamma$.
	Then $gQ\in\Gamma$, and for the generator~$S$ of $\Gamma$ as considered in Section~\ref{sec:mainresults} it holds
	\begin{equation*}
		gQS = \begin{bmatrix}	a & -b\\ c & -d	\end{bmatrix} \in \Gamma\,,
	\end{equation*}
	which yields that $(a,c)$ is reduced.
\end{proof}

The key observation needed for establishing Theorem~\ref{thm:weighted_distr} (in addition to the validity of Theorem~\ref{thm:jac_weight_lim_intro}) is a (natural) correspondence between words of length~$n$ (i.e., elements of~$W_n$) and reduced fractions of level~$n$ related to a given reduced fraction $v/w\in\Gamma.\infty \cap (0,1)$ (i.e., elements of~$\RF_n(v,w)$), as stated in the following lemma.

\begin{lemma}\label{lem:WnTx}
For each reduced fraction~$v/w\in \Gamma.\infty \cap (0,1)$ and each~$n\in\N$, the sets~$W_n$ and~$\RF_n(v,w)$ are in bijection via the map
\begin{equation}
\label{eq:mapWRF}
 W_n \to \RF_n(v,w)\,,\quad  \bmat{a}{b}{c}{d} \mapsto (av+bw, cv+dw)\,.
\end{equation}
For $v/w=1$, the map in~\eqref{eq:mapWRF} is $2:1$, i.e., each element in~$\RF_n(v,w)$ arises from exactly two elements in~$W_n$.
\end{lemma}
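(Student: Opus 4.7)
I will prove the lemma in three steps. Step~1 (well-definedness): writing $h = \bmat{a}{b}{c}{d} \in W_n$ as a product of letters from $\Lambda_q \subseteq \langle \Gamma, Q\rangle$ places $h$ in $\langle\Gamma, Q\rangle$; by Lemma~\ref{lem:Qredfrac}, the reduced fraction $(v,w)$ is represented by some $M = \bmat{v}{\ast}{w}{\ast} \in \langle\Gamma, Q\rangle$, so the product $hM$ again lies in $\langle\Gamma, Q\rangle$ with first column $(av+bw, cv+dw)$, whence another application of Lemma~\ref{lem:Qredfrac} shows that this pair is a reduced fraction. Its value equals $h.(v/w) \in \FM_q^{-n}(v/w)$ since $h$ is a composition of $n$ inverse branches of $\FM_q$, so the map takes values in $\RF_n(v,w)$.

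Step~2 (bijection for $v/w \in (0,1)$): The open cylinders $\{h.(0,1) : h \in W_n\}$ are pairwise disjoint open subintervals of $(0,1)$, inherited by iteration from the disjointness of the $q-1$ open branch domains of $\FM_q$. Since $h\colon [0,1]\to h.[0,1]$ is a bijection, $v/w \in (0,1)$ implies $h.(v/w) \in h.(0,1)$, i.e.\ lies in the \emph{interior} of the cylinder. Injectivity then follows from the disjointness of the interiors, and surjectivity is immediate since every preimage of $v/w$ under $\FM_q^n$ lies in a unique cylinder $h.[0,1]$ and equals $h.(v/w)$.

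Step~3 ($2$-to-$1$ property for $v/w = 1$): From $Q.1 = 1$ and $(Qg_k)^{-1} = g_k^{-1}Q$ we obtain $g_k^{-1}.1 = (Qg_k)^{-1}.1$, the shared boundary of two adjacent level-one cylinders. I would define an involution $\sigma\colon W_n \to W_n$ that swaps the innermost letter of a word with its partner, i.e.\ $h'g_k^{-1} \leftrightarrow h'(Qg_k)^{-1}$. This $\sigma$ is fixed-point free and satisfies $\sigma(h).1 = h.1$, so the map is at least $2$-to-$1$. For the converse, given $h_1 \neq h_2$ in $W_n$ with $h_1.1 = h_2.1$, I let $j$ be the smallest index where the letter decompositions differ. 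Writing $a, b$ for the evaluations at $1$ of the tail subwords $g_{j+1}^{(i)}\cdots g_n^{(i)}$, the equality $g_j^{(1)}.a = g_j^{(2)}.b$ with differing leading letters must occur at one of the shared boundaries of adjacent level-one cylinders, forcing $a = b \in \{0,1\}$. An inductive orientation analysis---showing that the leftmost cylinder in $W_m$ is always a chain of $g_{q-1}^{-1}$'s (orientation-preserving, fixing $0$) and the rightmost is $(Qg_{(q+1)/2})^{-1}$ followed by such a chain (orientation-reversing, sending $0$ to $1$)---then implies $\tilde h.1 \in (0,1)$ for every $\tilde h \in W_m$ with $m \geq 1$. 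Hence the tail cannot have positive length, so $j = n$ and $h_2 = \sigma(h_1)$, giving exactly $2$-to-$1$. The main obstacle is precisely this orientation bookkeeping, which must be carried out inductively at every recursion level to rule out tail subwords sending $1$ to $0$ or $1$.
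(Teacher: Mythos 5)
Your proposal is correct, but it takes a partly different route from the paper, so a comparison is in order. Step~1 coincides with the paper's argument (multiply a representative of $(v,w)$ by $h$ and invoke Lemma~\ref{lem:Qredfrac} together with uniqueness of the reduced representative). For injectivity at $v/w\in(0,1)$, the paper argues algebraically on matrix entries (from $a_1v+b_1w=a_2v+b_2w$ and $c_1v+d_1w=c_2v+d_2w$, using $(v,w)\neq(0,0)$ and $v\neq w$), whereas you use the pairwise disjointness of the open cylinders $h.(0,1)$, $h\in W_n$ --- a fact the paper itself states and uses in Section~\ref{sec:proofweight} --- which is arguably the more transparent reason and extends verbatim to surjectivity. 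The bigger divergence is at $v/w=1$: the paper simply writes $W_n=W_{n-1}\Lambda$, notes that $\Lambda\to\Lambda.1$ is exactly $2:1$ with $\Lambda.1\subset(0,1)$, and then applies the already-proved injective case at the interior points $p.1$; you instead run a direct first-differing-letter analysis with the involution swapping the innermost letter $g_k^{-1}\leftrightarrow (Qg_k)^{-1}$. Your argument does close, but the step you single out as the ``main obstacle'' --- an inductive orientation analysis of leftmost/rightmost cylinders to show that nonempty tail words cannot send $1$ to $0$ or $1$ --- is unnecessary: since every letter $p\in\Lambda$ satisfies $p.1=g_k^{-1}.1\in(0,1)$ and maps $(0,1)$ into the open cylinder $p.(0,1)\subseteq(0,1)$, a one-line induction gives $\tilde h.1\in(0,1)$ for every nonempty word $\tilde h$, which immediately forces the first differing index to be the last letter. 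So your approach buys a self-contained combinatorial proof at $x=1$ at the cost of boundary bookkeeping, while the paper's factorization $W_n=W_{n-1}\Lambda$ avoids that bookkeeping entirely by reducing to the interior case.
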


We emphasize that under the identification of any reduced fraction $(v,w)$ with the point $v/w$ in $P^1(\R)$, the map in Lemma~\ref{lem:WnTx} reads
\[
 W_n \to \RF_n(v,w)\,,\quad h\mapsto h.\frac{v}{w}\,.
\]
Indeed, the presentation of~$h.(v/w)$ for $h\in W_n$, $h = \textbmat{a}{b}{c}{d}$, as a reduced fraction is $(av+bw, cv+dw)$.

\begin{proof}[Proof of Lemma~\ref{lem:WnTx}]
Let $v/w\in\Gamma.\infty\cap (0,1]$.
We show that the map in~\eqref{eq:mapWRF} is well-defined.
For each $h\in W_n$ given by $h=\begin{bsmallmatrix}a & b\\ c & d\end{bsmallmatrix}$ we have
\begin{equation*}
	h.\frac{v}{w} = \frac{a(v/w)+b}{c(v/w)+d} = \frac{av + bw}{cv+dw}\,,
\end{equation*}
which is an element in $\FM^{-n}(v/w)$.
Since $(v,w)$ is a reduced fraction, there exists $\begin{bsmallmatrix}v & *\\ w & *\end{bsmallmatrix}\in\Gamma$ and therefore
\begin{equation*}
	\bmat{a}{b}{c}{d}\bmat{v}{*}{w}{*} = \bmat{av+bw}{*}{cv+dw}{*}\in\langle\Gamma,Q\rangle\,.
\end{equation*}
Taking advantage of Lemma~\ref{lem:Qredfrac} in case that $\det h = -1$, this shows that the pair $(av+bw, cv+dw)$ is the reduced fraction representing $h.(v/w)$ and that it is contained in $\RF_n(v,w)$.

For $v/w\not=1$, let $h_1=\begin{bsmallmatrix}a_1 & b_1\\ c_1 & d_1\end{bsmallmatrix}, h_2=\begin{bsmallmatrix}a_2 & b_2\\ c_2 & d_2\end{bsmallmatrix}\in W_n$ be such that
\begin{equation*}
	(a_1v+b_1w, c_1v+d_1w) = (a_2v+b_2w, c_2v+d_2w)\,.
\end{equation*}
Then $a_1v+b_1w=a_2v+b_2w$ and $c_1v+d_1w=c_2v+d_2w$.
Since $(v,w)\neq (0,0)$ and $v\neq w$ it follows that $h_1=h_2$, hence the map in~\eqref{eq:mapWRF} is injective.

For $v/w=1$, as $Q.1=1$, we have 
\[
 g_k^{-1}.1 = (Qg_k)^{-1}.1 \qquad\text{for $k\in\left\{ \frac{q+1}2,\ldots, q-1 \right\}$}
\]
and $g_k^{-1}.1\not=g_\ell^{-1}.1$ for $k,\ell\in\{(q+1)/2,\ldots, q-1\}$, $k\not=\ell$. Therefore, the map from $\Lambda$ to $\Lambda.1$ is $2:1$. As any point in $\Lambda.1$ is different from~$1$ and $W_n = W_{n-1}\Lambda$, the argument from above for $v/w\not=1$ implies that the map in~\eqref{eq:mapWRF} is indeed $2:1$ for $v/w=1$.

For all $v/w\in\Gamma.\infty\cap (0,1]$, surjectivity of the map follows by definition of $\RF_n(v,w)$.
\end{proof}

With these preparations we can now provide a short proof of Theorem~\ref{thm:weighted_distr}.

\begin{proof}[Proof of Theorem~\ref{thm:weighted_distr}]
Using Theorem~\ref{thm:jac_weight_lim_intro} for $x=v/w$ and combining with the map from Lemma~\ref{lem:WnTx} we obtain
\[
 m = \starlim_{n\to\infty} \frac{v}{w} \log(n) \sum_{h\in W_n} \left|h'\left(\frac{v}{w}\right)\right|\delta_{h.(v/w)} = \starlim_{n\to\infty} c_{v/w} vw\log(n) \sum_{(r,s)\in \RF_n(v,w)} \frac{1}{s^2} \delta_{r/s}\,,
\]
where $c_1=2$ and $c_x=1$ for $x\not=1$.
For the second equality, we use that for each $h\in W_n$ we have
\[
 h'\left(\frac{v}{w}\right) = \frac{w^2}{s^2}
\]
with $r/s$ denoting be the reduced form of~$h.(v/w)$.
\end{proof}

We now turn to the discussion of Theorem~\ref{thm:jac_weight_lim_intro} and start with some preparations.
As we will establish Theorem~\ref{thm:jac_weight_lim_intro} first for $x\not=1$ and then use this result to show it for $x=1$, we first let $x\in (0,1)$.
We emphasize that almost all objects defined in what follows depend on~$x$.
Anyhow, this dependence will not be reflected in the notation in favor of simplicity.
For~$n\in\N$ set 
\[
\varrho_{n} \coloneqq x \log(n) \sum_{h\in W_n} |h'(x)|\delta_{h.x}\,,
\]
which is a Borel measure on~$(0,1]$. 
Theorem~\ref{thm:jac_weight_lim_intro} is the statement that
\begin{equation}\label{eq:jacweightabstract}
\starlim_{n\to\infty} \varrho_{n} = \leb\,.
\end{equation}
In order to establish~\eqref{eq:jacweightabstract}, we will take advantage of  
Theorem~\ref{thm:dyn_intervals}.

For~$\eps>0$ we set 
\[
V_\eps \coloneqq V_\eps(x) \coloneqq \left(x-\frac{\eps}{2}, x+ 
\frac{\eps}{2}\right)\,,
\]
the open $\eps/2$-neighborhood of~$x$ in~$\R$. For $n\in\N_0$ we further set 
\[
\mc V_{\eps,n} \coloneqq \FM^{-n}\bigl( V_\eps \bigr)
\]
and 
\[
\leb_{\eps,n} \coloneqq \frac{\log(n)}{\mu\bigl( V_\eps \bigr)}\,\leb\vert_{\mc V_{\eps,n}}\,,
\]
which is a Borel measure on~$(0,1]$. By Theorem~\ref{thm:dyn_intervals} we
know that 
\begin{equation}\label{eq:thmleb}
\starlim_{n\to\infty} \leb_{\eps,n}  = \leb\,.
\end{equation}
In what follows, we will show that, for $\eps>0$ sufficiently small, the measures~$\leb_{\eps,n}$ and $\varrho_n$
are sufficiently near to each other such that their weak-star limits (as $n\to\infty$ and $\eps\to 0$, in this order) are the
same. For this, we will proceed via their distribution functions. We let
$\Delta^\leb_{\eps,n}$ denote the distribution function of~$\leb_{\eps,n}$, and 
$\Delta^\varrho_n$ the distribution function of~$\varrho_n$. That is, for all 
$y\in (0,1]$, 
\[
\Delta^\leb_{\eps,n}(y) \coloneqq \leb_{\eps,n}\bigl( (0,y] 
\bigr)\qquad\text{and}\qquad \Delta^\varrho_n(y) \coloneqq \varrho_n\bigl( (0,y] 
\bigr)\,.
\]
From now on we suppose that $\eps>0$ is sufficiently small such that 
$V_\eps\subseteq (0,1)$. Then, for all $n\in\N$, 
\[
 \FM^{-n}\left( V_\eps \right) = \bigcup_{h\in W_n} h.V_\eps
\]
is a disjoint union due to the pairwise disjointness of the sets $h.(0,1)$,
$h\in W_n$. In particular,
\[
 \leb\left( \FM^{-n}\left( V_\eps \right) \right) = \sum_{h\in W_n} 
\leb\left( h.V_\eps \right)\,.
\]
For each~$y\in (0,1]$ we now obtain that
\begin{align}
\nonumber
\bigl| \Delta^\leb_{\eps,n}(y) & - \Delta^\varrho_n(y) \bigr|
\\
\nonumber
& = \log(n)\, 
\left| 
\frac{1}{\mu\bigl( V_\eps \bigr)} m\left(\FM^{-n}\bigl( V_\eps \bigr)\cap  
(0,y] 
\right) -  x \sum_{h\in W_n} |h'(x)|\,\delta_{h.x}\bigl( (0,y] \bigr)\right|
\\
\nonumber
& = \log(n) \left| \sum_{h\in W_n} \frac{1}{\mu\bigl( V_\eps \bigr)} \leb\bigl( 
h.V_\eps \cap (0,y] \bigr) - x|h'(x)|\,\delta_{h.x}\bigl( (0,y] \bigr)  \right|
\\
\label{eq:bounddiffdistr}
& \leq \log(n) \sum_{h\in W_n} \left| \frac{1}{\mu\bigl( V_\eps \bigr)} 
\leb\bigl( h.V_\eps \cap (0,y] \bigr) - x|h'(x)|\,\delta_{h.x}\bigl( (0,y] 
\bigr) \right|\,.
\end{align}
For each $h\in W_n$, the set $h.V_\eps$ is an open interval and hence we either have
$h.V_\eps \subseteq (0,y]$ or $h.V_\eps \cap (0,y] = \emptyset$ or $y\in h.V_\eps$.
For the former two situations we immediately obtain the following simplification of the summand for~$h$ in~\eqref{eq:bounddiffdistr}:
\begin{align*}
& \left| \frac{1}{\mu\bigl( V_\eps \bigr)}
\leb\bigl( h.V_\eps \cap (0,y] \bigr) - x|h'(x)|\,\delta_{h.x}\bigl( (0,y]
\bigr) \right|
\\
& \hphantom{\frac{1}{\mu\bigl( V_\eps \bigr)}
\leb\bigl( h.V_\eps \cap (0,y] \bigr) -}
=
\begin{cases}
\left| \frac{\leb( h.V_\eps)}{\mu( V_\eps)}
 - x|h'(x)| \right| & \text{if $h.V_\eps \subseteq (0,y]$,}
\\[1mm]
0 & \text{if $h.V_\eps \cap (0,y] = \emptyset$.}
\end{cases}
\end{align*}
By disjointness of the sets $h.V_\eps$, $h\in W_n$, the situation $y\in h.V_\eps$ can materialize for at most one $h\in W_n$, in which case we denote this element by $h_{y,\eps,n}$. Combining these arguments with~\eqref{eq:bounddiffdistr}, we obtain the following, \textit{a priori} rather rough estimate for the difference of the two distribution
functions at~$y$:
\begin{align}
\nonumber
&\bigl| \Delta^\leb_{\eps,n}(y) - \Delta^\varrho_n(y) \bigr|
\\
\nonumber
& \hphantom{\Delta^\leb_{\eps,n}(y) -}
\leq  \log(n)\left| \frac{1}{\mu\bigl( V_\eps \bigr)}
\leb\bigl( h_{y,\eps,n}.V_\eps \cap (0,y] \bigr) - x|h_{y,\eps,n}'(x)|\,\delta_{h_{y,\eps,n}.x}\bigl( (0,y] \bigr) \right|
\\
\label{eq:estimsum}
& \hphantom{\Delta^\leb_{\eps,n}(y) -  \leq\ }
+ \log(n) \sum_{h\in W_n} \left| \frac{1}{\mu\bigl( V_\eps \bigr)}
\leb\bigl( h.V_\eps\bigr) - x|h'(x)| \right|\,,
\end{align}
where the summand with $h_{y,\eps,n}$ is to be understood as~$0$ if $h_{y,\eps,n}$ does not exist.

In what follows we will prove that the summand with $h_{y,\eps,n}$ as well as the sum over~$W_n$ becomes
small as $n\to\infty$ and $\eps\to 0$. Before we start with a rigorous discussion, we provide an indication why this might be expected. To that end we note that
\begin{equation}\label{eq:hVepsinterval}
h.V_\eps =
\begin{cases}
\left( h.\left(x-\frac{\eps}{2}\right), h.\left( x+\frac{\eps}{2}\right) 
\right) 
& \text{if $\det h > 0$}
\\
\left( h.\left(x+\frac{\eps}{2}\right), h.\left( x-\frac{\eps}{2}\right) 
\right) 
& \text{if $\det h < 0$\,.}
\end{cases}
\end{equation}
Thus, 
\[
\leb\bigl( h.V_\eps \bigr) = \left| h.\left(x-\frac{\eps}2\right) - h.\left(x+ 
\frac{\eps}{2}\right)\right|
\]
and hence
\begin{equation*}
\lim_{\eps \to 0} \frac{ \leb\bigl( h.V_\eps \bigr) }{\eps} =  \lim_{\eps\to 0} 
\frac{ \left| h.\left(x-\frac{\eps}2\right) - h.\left(x+ 
\frac{\eps}{2}\right)\right| }{\eps} =  |h'(x)|\,.
\end{equation*}
(We note that $h$ is indeed differentiable on all of~$(0,1)$, for each $h\in W_n$.)
Further, 
\begin{equation}\label{eq:munotzero}
\mu\bigl( V_\eps \bigr) = \int\limits_{x-\eps/2}^{x+\eps/2} \frac1x\,d\leb = 
\log\left( \frac{x+\eps/2}{x-\eps/2}\right)
\end{equation}
and hence
\begin{equation}\label{eq:lim_muUeps}
\lim_{\eps\to 0} \frac{\eps}{\mu\bigl( V_\eps \bigr)} = \lim_{\eps\to 0} 
\frac{\eps}{ \log\left( \frac{x+\eps/2}{x-\eps/2}\right) } = x\,.
\end{equation}
Thus,
\begin{equation*}
\lim_{\eps\to 0} \frac{1}{\mu\bigl(V_\eps\bigr)}\, \leb\bigl( h.V_\eps \bigr) = 
\lim_{\eps \to 0} \frac{\eps}{\mu\bigl(V_\eps\bigr)}\, \frac{ \leb\bigl( 
h.V_\eps \bigr)}{\eps} = x|h'(x)|\,.
\end{equation*}
This shows that each summand in the $W_n$-sum in~\eqref{eq:estimsum} converges to~$0$ as $\eps\to 0$. However, for showing that the same is true for the full sum in the limit $n\to\infty$, a more subtle argumentation is necessary. The consideration above further gives an indication that the $h_{y,\eps,n}$-summand becomes small for $\eps\to 0$ and, in a certain sense, also for $n\to\infty$.

In the remainder of this section, we complete these heuristics with rigorous proofs. We start with two auxiliary results in Lemmas~\ref{lem:wordorder} and~\ref{lem:abs-h} below, finding the largest derivative among the elements in~$W_n$.

\begin{lemma}\label{lem:wordorder}
	For every $h\in W_1$ and $x\in[0,1]$ we have $g_{q-1}^{-1}.x\leq h.x$.
\end{lemma}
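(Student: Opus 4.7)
The strategy is direct: compute $g_{q-1}^{-1}$ explicitly, identify its image as the leftmost piece of the canonical partition of $[0,1]$ induced by the inverse branches $W_1=\Lambda_q$ of~$\FM$, and conclude by a one-line sandwich inequality.

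First, I would substitute $s(q)=0$, $s(q-1)=1$, and $s(q-2)=\cw$ into~\eqref{eq:gk} to obtain
\[
 g_{q-1}^{-1} = \bmat{1}{0}{\cw}{1}\,,\qquad g_{q-1}^{-1}.x = \frac{x}{\cw x + 1}\,.
\]
This map is strictly monotone increasing on~$[0,1]$ and satisfies $g_{q-1}^{-1}.[0,1] = [0, 1/(\cw+1)]$. In particular, $g_{q-1}^{-1}.x \leq 1/(\cw+1)$ for every $x\in[0,1]$.

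Next, recall from~\eqref{eq:FM1}--\eqref{eq:FM2} and the subsequent gluing identities $(Qg_{k+1})^{-1}.0 = g_k^{-1}.0$ and $(Qg_k)^{-1}.1 = g_k^{-1}.1$ that the family $\{h.[0,1] : h\in W_1\}$ is, up to endpoints, a partition of~$[0,1]$ into pairwise adjacent intervals. Since $g_{q-1}^{-1}.0 = 0$ is the left endpoint of~$[0,1]$ and one member of this partition is $g_{q-1}^{-1}.[0,1] = [0, 1/(\cw+1)]$, it must be the leftmost piece. Consequently, every other $h\in W_1$ satisfies $h.[0,1] \subseteq [1/(\cw+1), 1]$, and in particular $h.x \geq 1/(\cw+1)$ for all $x\in[0,1]$.

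Combining the two observations yields $g_{q-1}^{-1}.x \leq 1/(\cw+1) \leq h.x$ for every $h\in W_1\setminus\{g_{q-1}^{-1}\}$, while for $h=g_{q-1}^{-1}$ the claim is trivial. I do not foresee a substantial obstacle; the only real verifications are the explicit matrix calculation above (which relies on the values of $s$ at integers $q-2,q-1,q$) and the elementary observation that $g_{q-1}^{-1}.[0,1]$ is the leftmost piece of the partition, both of which are immediate from the conventions set up in Sections~\ref{sec:mainresults} and~\ref{sec:origin}.
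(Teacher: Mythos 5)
Your proof is correct, but it follows a genuinely different route from the paper's. You argue structurally: after computing $g_{q-1}^{-1}=\textbmat{1}{0}{\cw}{1}$, you use that the closed intervals $h.[0,1]$, $h\in W_1$, tile $[0,1]$ with pairwise disjoint interiors, so that $g_{q-1}^{-1}.[0,1]=[0,1/(\cw+1)]$ is the leftmost tile, every other $h.x$ is at least $1/(\cw+1)$, and the claim follows by sandwiching through the common threshold $1/(\cw+1)$. The paper instead argues purely algebraically: from $s(k)\leq s(k-1)$ for $k\in\{(q+1)/2,\dotsc,q-1\}$ it derives the pointwise ordering $g_{k_2}^{-1}.x\leq g_{k_1}^{-1}.x$ for all $k_1<k_2$ and $x\in[0,1]$, and then treats the reflected branches via $(Qg_k)^{-1}=g_k^{-1}Q$, the monotonicity of $g_k^{-1}$, and $Q.[0,1]=[1,\infty]$. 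Your argument is shorter and needs only the explicit matrix of $g_{q-1}^{-1}$ together with the tiling property, which the paper does have available at this point (it is exactly the well-definedness of $\FM$ via the gluing identities following~\eqref{eq:FM2}, and the pairwise disjointness of the sets $h.(0,1)$, $h\in W_1$, is invoked earlier in Section~\ref{sec:proofweight}); you should cite these facts rather than call them immediate. The paper's computation is self-contained, avoiding any appeal to the partition structure, and yields as a byproduct the stronger statement that the branches $g_k^{-1}$ are pointwise ordered in $k$, although that extra strength is not needed elsewhere.
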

\begin{proof}
	We recall the matrix representation of~$g_k$ in~\eqref{eq:gk}.
	Since $s(k)\leq s(k-1)$ for every $k\in\{(q+1)/2,\dotsc,q-1\}$, we see that for $(q+1)/2\leq k_1<k_2\leq q-1$ and $x\in[0,1]$ it holds
	\begin{equation*}
		(s(k_2)x+s(k_2+1))(s(k_1-1)x+s(k_1)) \leq (s(k_2-1)x + s(k_2))(s(k_1)x + s(k_1+1))
	\end{equation*}
	which is equivalent to
	\begin{equation*}
		g_{k_2}^{-1}.x\leq g_{k_1}^{-1}.x\quad\text{for all $x\in[0,1]$}\,,
	\end{equation*}
	and in particular $g_{q-1}^{-1}.x\leq g_{k-1}^{-1}.x$.
	Since $g_k^{-1}$ for any $k\in\{(q+1)/2,\dotsc,q-1\}$ is monotone increasing and $Q.[0,1]=[1,\infty]$, it follows that $g_k^{-1}.x\leq g_k^{-1}Q.x$ for all $x\in[0,1]$.
\end{proof}

For convenience in the proof of the following lemma, we note that for each $k\in\Z$,
\begin{equation}\label{eq:Qgk}
 Qg_k = \bmat{ -s(k-1) }{ s(k) }{ s(k) }{ -s(k+1) }\,.
\end{equation}

\begin{lemma}\label{lem:abs-h}
	For each $n\in\N$ and $h\in W_n$, $\lvert h'\rvert$ is monotone
decreasing.
	Further, $\left((g_{q-1}^{-1})^n\right)' \geq \lvert h'\rvert$.
\end{lemma}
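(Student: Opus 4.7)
The approach is to obtain explicit control on the matrix entries of any $h\in W_n$ and compare them directly with those of $(g_{q-1}^{-1})^n$. My starting observation is that, in the explicit expressions $g_k^{-1}=\bmat{s(k)}{s(k+1)}{s(k-1)}{s(k)}$ and (using \eqref{eq:Qgk}) $(Qg_k)^{-1}=\bmat{s(k+1)}{s(k)}{s(k)}{s(k-1)}$, all four entries are strictly positive for every $k\in\{(q+1)/2,\ldots,q-1\}$. Since the product of two matrices with non-negative entries has non-negative entries, a short induction on word length shows that \emph{every} $h\in W_n$ is represented by a matrix $\bmat{a}{b}{c}{d}$ with $a,b,c,d\geq 0$ (indeed all entries are positive, except possibly for a zero coming from $s(q)=0$, which does not affect monotonicity). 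Because $|\det h|=1$, this gives $|h'(x)|=1/(cx+d)^2$, and since $cx+d$ is non-negative and non-decreasing on $[0,1]$, the function $|h'|$ is monotone decreasing on $[0,1]$. This handles the first part of the lemma.

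For the second part, a direct calculation shows $g_{q-1}^{-1}=\bmat{1}{0}{\cw}{1}$, hence $(g_{q-1}^{-1})^n=\bmat{1}{0}{n\cw}{1}$ and $((g_{q-1}^{-1})^n)'(x)=1/(n\cw x+1)^2$. The pointwise inequality $((g_{q-1}^{-1})^n)'\geq |h'|$ on $[0,1]$ is therefore equivalent to $cx+d\geq n\cw x+1$ on $[0,1]$. As both sides are linear in $x$ with positive values, it suffices to check the endpoint inequalities
\[
d\geq 1 \qquad\text{and}\qquad c+d\geq n\cw+1.
\]

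The plan is to prove both inequalities simultaneously by induction on $n$. For the base case $n=1$, one reads off $d\in\{s(k-1),s(k)\}$ and $c+d=s(k-1)+s(k)$ from the two types of generators in $\Lambda$. The inequality $s(j)\geq 1$ on $\{1,\ldots,q-1\}$ follows from $\sin(j\pi/q)\geq \sin(\pi/q)$ there, with equality only at $j\in\{1,q-1\}$. For $c+d$, the sum-to-product identity
\[
s(k)+s(k-1)=\frac{2\cos(\pi/(2q))\sin((2k-1)\pi/(2q))}{\sin(\pi/q)}
\]
combined with the monotonicity of $\sin$ on $[\pi/2,\pi]$ shows that $s(k-1)+s(k)$ is decreasing in $k$ for $k\in\{(q+1)/2,\ldots,q-1\}$, attaining its minimum $1+\cw$ at $k=q-1$. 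Similarly, $s(k)+s(k+1)\geq 1$ in this range (with equality at $k=q-1$). For the inductive step, write $h=\tilde h\cdot k$ with $\tilde h=\bmat{a_1}{b_1}{c_1}{d_1}\in W_{n-1}$ and $k=\bmat{a_2}{b_2}{c_2}{d_2}\in W_1$. Then
\begin{align*}
d &= c_1 b_2 + d_1 d_2 \geq d_1 d_2 \geq 1,
\\
c+d &= c_1(a_2+b_2)+d_1(c_2+d_2) \geq c_1\cdot 1 + d_1(1+\cw) = (c_1+d_1)+d_1\cw,
\end{align*}
which by the induction hypothesis is $\geq ((n-1)\cw+1)+\cw=n\cw+1$.

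The only non-routine point is verifying the three base-case trigonometric inequalities ($s(j)\geq 1$, $s(j)+s(j+1)\geq 1$, $s(j-1)+s(j)\geq 1+\cw$ for $j\in\{(q+1)/2,\ldots,q-1\}$), which is where all the arithmetic of the Hecke triangle group enters; everything else is a bookkeeping exercise with $2\times 2$ matrices with non-negative entries.
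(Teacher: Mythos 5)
Your proof is correct, but it follows a genuinely different route from the paper's. The paper first compares the letters: it shows $\lvert(g_k^{-1}Q)'\rvert\leq\lvert(g_k^{-1})'\rvert$ and $s(k-1)x+s(k)\geq \cw x+1$, so that $g_{q-1}^{-1}$ has the largest absolute derivative in $W_1$, and then propagates this to words of length $n$ via the chain rule, using the separate order lemma $g_{q-1}^{-1}.x\leq h.x$ (Lemma~\ref{lem:wordorder}) together with the monotone decrease of $\lvert h'\rvert$ to control where each factor of the derivative is evaluated. You instead reduce the pointwise inequality $\bigl((g_{q-1}^{-1})^n\bigr)'\geq\lvert h'\rvert$ to the affine endpoint inequalities $d\geq 1$ and $c+d\geq n\cw+1$ for the bottom row of $h$, and prove these by induction on word length, with the trigonometric base case $s(j)\geq 1$ and $s(k-1)+s(k)\geq 1+\cw$ (your sum-to-product argument for the latter is fine; it can also be seen directly from $s(k-1)\geq s(q-2)=\cw$ and $s(k)\geq s(q-1)=1$, which is what the paper uses). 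Your route bypasses Lemma~\ref{lem:wordorder} and the chain-rule bookkeeping entirely and yields slightly more explicit information, namely $cx+d\geq n\cw x+1$ for every $h\in W_n$; the paper's route is more dynamical and reuses a lemma it needs anyway. One small presentational point: when you assert in the first part that $\lvert h'(x)\rvert=1/(cx+d)^2$ is monotone decreasing, you should note that the bottom-row entries stay strictly positive under multiplication (the zeros from $s(q)=0$ occur only in the top rows of $g_{q-1}^{-1}$ and $(Qg_{q-1})^{-1}$, and the bottom row of a product is the bottom row of the first factor times the second matrix, hence positive); your later bound $d\geq 1$ supplies this in any case, so the gap is only in the ordering of the argument, not in substance.
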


\begin{proof}
From~\eqref{eq:gk} and \eqref{eq:Qgk} we obtain that each element in the alphabet~$\Lambda=W_1$ has
a matrix representation with purely non-negative coefficients of which the two bottom-row coefficients are positive and the two top-row coefficients cannot vanish simultaneously.
Thus, also $h\in W_n$ has such a matrix representation as being a composition of elements in~$W_1$, say
\[
h = \bmat{a}{b}{c}{d}
\]
with $c,d>0$ (and $a,b\geq 0$). Since $|h'(\xi)| = (c\xi+d)^{-2}$, we immediately see that $|h'|$ is monotone decreasing.

It remains to establish the second statement of the lemma. 
For each $k\in\{(q+1)/2,\dotsc,q-1\}$ we obtain from~\eqref{eq:gk} and~\eqref{eq:Qgk} that
\begin{align*}
\lvert (g_k^{-1})'(x)\rvert = \frac{1}{\bigl(s(k-1)x+s(k)\bigr)^2},
\shortintertext{and}
\lvert (g_k^{-1}Q)'(x)\rvert = \frac{1}{\bigl(s(k)x+s(k-1)\bigr)^2}
\end{align*}
for all $x\in [0,1]$.
For every $k\in\{(q+1)/2,\dotsc,q-1\}$ we have $s(k)\leq s(k-1)$, and hence, for all $x\in[0,1]$,
\[
s(k)x + s(k-1) \geq s(k-1)x + s(k) > 0\,.
\]
Thus, for $x\in [0,1]$,
	\begin{equation*}
		\lvert (g_k^{-1})'(x)\rvert \geq \lvert (g_k^{-1}Q)'(x)\rvert\,.
	\end{equation*}
Therefore the element in $W_1$ with the largest absolute derivative is of the form $g_k$ for
$k\in\{(q+1)/2,\dotsc,q-1\}$.
Using again $s(k)\leq s(k-1)$, we deduce
	\begin{equation*}
		s(k-1)x+s(k) \geq s(q-2)x+s(q-1) = \cw x+1
	\end{equation*}
for $k\in \{(q+1)/2,\dotsc,q-1\}$ and $x\in [0,1]$.
It follows that for $h\in W_1$,
\begin{equation}\label{eq:derivsmall}
\lvert h'\rvert \leq \lvert(g_{q-1}^{-1})'\rvert
\end{equation}
on~$[0,1]$.
Further, for $h\in W_1$ and $x\in [0,1]$ we have
\begin{equation}\label{eq:pointsmall}
g_{q-1}^{-1}.x \leq h.x\,.
\end{equation}
See Lemma~\ref{lem:wordorder}.
For any $h= h_1\dots h_n\in W_n$, combining~\eqref{eq:derivsmall}, \eqref{eq:pointsmall} and the fact that $|h'|$ is decreasing, the product rule for derivatives yields
\[
|h'| \leq \left|\left((g_{q-1}^{-1})^{n}\right)'\right|
\]
on all of~$[0,1]$. Finally, because
\[
g_{q-1}^{-n} = \bmat{1}{0}{n\cw}{1}
\]
and hence
\[
 \left(g_{q-1}^{-n}\right)'(x) = \frac{1}{(n\cw x + 1)^2},
\]
the derivative of $g_{q-1}^{-n}$ is positive on $[0,1]$. This completes the proof.
\end{proof}

We can now establish estimates for the $h_{y,\eps,n}$-summand and the summands of the $W_n$-sum in~\eqref{eq:estimsum} that are sufficient for our purposes.

\begin{lemma}\label{lem:summands}
We have the following estimates:
\begin{enumerate}[label=$\mathrm{(\roman*)}$, ref=$\mathrm{\roman*}$]
\item\label{sum:genh} For all $x\in (0,1)$, all $\eps>0$ such that $V_\eps(x)\subseteq (0,1)$, all $n\in\N$ and all $h\in W_n$ we have
\[
\left| \frac{1}{\mu\bigl(V_\eps\bigr)}\leb\bigl(h.V_\eps\bigr) - x|h'(x)|  \right| \leq \left( \left| \frac{\eps}{\mu\bigl(V_\eps\bigr)} - x \right| +  \frac{\eps}{x}  \right) \frac{\leb\bigl(h.V_\eps\bigr)}{\eps}\,,
\]
where $V_\eps\coloneq V_\eps(x)$.
\item\label{sum:specialh} Let $x\in (0,1)$ and $\eps>0$ such that $V_\eps(x)\subseteq (0,1)$. Then there exists $M>0$ (depending on $x$ and~$\eps$) such that for all $y\in (0,1]$ and all $n\in\N$ we have
\[
\left| \frac{1}{\mu\bigl(V_\eps\bigr)} \leb\bigl( h_{y,\eps,n}.V_\eps \cap (0,y]\bigr) - x|h_{y,\eps,n}'(x)| \delta_{h_{y,\eps,n}.x}\bigl( (0,y]\bigr)  \right| \leq \frac{M}{n^2}\,,
\]
where $V_\eps\coloneqq V_\eps(x)$.
\end{enumerate}
\end{lemma}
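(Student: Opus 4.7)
The plan is to establish parts~\eqref{sum:genh} and~\eqref{sum:specialh} separately, both exploiting the explicit fractional linear form of the elements of~$W_n$.

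For part~\eqref{sum:genh}, write $h=\textbmat{a}{b}{c}{d}$; as in the proof of Lemma~\ref{lem:abs-h}, we have $c,d>0$ and $|\det h|=1$. A direct computation of $h(x+\eps/2)-h(x-\eps/2)$ on a common denominator produces the clean exact identity
\[
 \leb(h.V_\eps) \;=\; \frac{\eps}{(cx+d)^2 - (c\eps/2)^2}\,.
\]
Combined with $|h'(x)|=1/(cx+d)^2$, this gives
\[
 \frac{\leb(h.V_\eps)}{\eps} - |h'(x)| \;=\; \left(\frac{c\eps/2}{cx+d}\right)^{\!2}\,\frac{\leb(h.V_\eps)}{\eps}\,,
\]
and the elementary bound $c/(cx+d)\leq 1/x$ (valid since $c,d,x>0$) reduces this to
\[
 0 \;\leq\; \frac{\leb(h.V_\eps)}{\eps} - |h'(x)| \;\leq\; \frac{\eps^2}{4x^2}\cdot \frac{\leb(h.V_\eps)}{\eps}\,.
\]
The claim then follows by inserting the algebraic splitting
\[
 \frac{1}{\mu(V_\eps)}\leb(h.V_\eps) - x|h'(x)| \;=\; \left(\frac{\eps}{\mu(V_\eps)} - x\right)\frac{\leb(h.V_\eps)}{\eps} + x\!\left(\frac{\leb(h.V_\eps)}{\eps} - |h'(x)|\right)
\]
into the triangle inequality, using $\eps^2/(4x)\leq \eps/x$, which is guaranteed by $V_\eps\subseteq (0,1)$ (forcing $\eps<2$).

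For part~\eqref{sum:specialh}, bound the expression crudely by
\[
 \frac{\leb(h_{y,\eps,n}.V_\eps)}{\mu(V_\eps)} + x\,|h_{y,\eps,n}'(x)|\,,
\]
using that both terms are non-negative and that $\leb(h_{y,\eps,n}.V_\eps\cap(0,y])\leq \leb(h_{y,\eps,n}.V_\eps)$ together with $\delta_{h_{y,\eps,n}.x}((0,y])\leq 1$. By Lemma~\ref{lem:abs-h} every $h\in W_n$ satisfies $|h'|\leq (g_{q-1}^{-n})'$ pointwise on $[0,1]$, so integration over $V_\eps$ also gives $\leb(h.V_\eps)\leq \leb(g_{q-1}^{-n}.V_\eps)$. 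The explicit form $g_{q-1}^{-n}=\textbmat{1}{0}{n\cw}{1}$ combined with the identity from part~\eqref{sum:genh} yields
\[
 |(g_{q-1}^{-n})'(x)| \;=\; \frac{1}{(n\cw x + 1)^2}\,, \qquad \leb(g_{q-1}^{-n}.V_\eps) \;=\; \frac{\eps}{(n\cw(x-\eps/2)+1)(n\cw(x+\eps/2)+1)}\,,
\]
and both quantities are uniformly of order $n^{-2}$ with constants depending only on $x$ and $\eps$. An appropriate $M=M(x,\eps)$ absorbs these constants, and the case where $h_{y,\eps,n}$ does not exist is trivial by convention.

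The main conceptual step is spotting the exact identity for $\leb(h.V_\eps)$ as a fractional linear transformation applied to the interval~$V_\eps$; once this is in hand, part~\eqref{sum:genh} reduces to the algebraic decomposition displayed above, and part~\eqref{sum:specialh} reduces to applying the monotonicity result of Lemma~\ref{lem:abs-h}. No further obstacle is anticipated.
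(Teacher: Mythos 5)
Your proposal is correct and takes essentially the same route as the paper: part~\eqref{sum:genh} rests on the identical algebraic splitting into the terms $\bigl(\tfrac{\eps}{\mu(V_\eps)}-x\bigr)\tfrac{\leb(h.V_\eps)}{\eps}$ and $x\bigl(\tfrac{\leb(h.V_\eps)}{\eps}-|h'(x)|\bigr)$, and part~\eqref{sum:specialh} uses the same crude triangle-inequality bound together with Lemma~\ref{lem:abs-h} and the explicit matrix $g_{q-1}^{-n}$. The only deviation is that you replace the paper's mean-value-theorem estimates by the exact formula $\leb(h.V_\eps)=\eps\bigl((cx+d)^2-(c\eps/2)^2\bigr)^{-1}$, which is a harmless (indeed slightly sharper, giving $\eps^2/(4x)$ in place of $\eps/x$) technical variant.
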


\begin{proof}
Let $x\in (0,1)$, $\eps>0$ such that $V_\eps=V_\eps(x)\subseteq (0,1)$, $n\in\N$ and $h\in W_n$.
In order to establish \eqref{sum:genh}, we first show that
\begin{equation}\label{eq:genh_intest}
 \left| |h'(x)|\,\frac{\eps}{\leb(h.V_\eps)} - 1 \right| \leq \frac{\eps}{x^2}\,.
\end{equation}
To that end we note that
\[
m(h.V_\eps) = \left| h.\left(x-\frac{\eps}2\right) - h.\left(x+\frac{\eps}2 \right) \right| = \eps |h'(\xi)|
\]
for some $\xi\in V_\eps$ (depending on~$x$), by the mean value theorem. We pick the representative of~$h$ with positive bottom-row entries, say
\[
  h = \bmat{a}{b}{c}{d}
\]
with $c,d>0$. Then
\begin{align}\label{eq:genh_step1}
|h'(x)|\,\frac{\eps}{\leb(h.V_\eps)} - 1 & = \frac{|h'(x)|}{|h'(\xi)|} - 1 = \frac{(c\xi+d)^2}{(cx+d)^2} - 1
 = \frac{c^2(\xi+x) + 2cd}{(cx+d)^2} (\xi-x)\,.
\end{align}
From $x,\xi\in (0,1)$ we obtain $0<x+\xi<2$, and $\xi\in V_\eps(x)$ implies $|\xi-x|<\eps/2$. Using these estimates in~\eqref{eq:genh_step1} yields
\begin{align*}
 \left| |h'(x)|\,\frac{\eps}{\leb(h.V_\eps)} - 1 \right| & \leq \eps\,\frac{c^2 + cd}{(cx+d)^2}
 = \frac{\eps}{x^2}\,\frac{c^2+cd}{\left( c + \frac{d}{x} \right)^2}
  < \frac{\eps}{x^2}\, \frac{c^2+cd}{(c+d)^2}  < \frac{\eps}{x^2}\,,
\end{align*}
which shows~\eqref{eq:genh_intest}. For the penultimate inequality we used that $d>0$ and $x\in (0,1)$ and hence $d/x > d$, and further $c>0$.
We conclude further that
\begin{align*}
\left| \frac{1}{\mu(V_\eps)}\leb(h.V_\eps) - x|h'(x)|\right| & = \left| \frac{\eps}{\mu(V_\eps)} \frac{\leb(h.V_\eps)}{\eps} - x|h'(x)| \right|
\\
& = \left| \left( \frac{\eps}{\mu(V_\eps)} - x \right)\frac{\leb(h.V_\eps)}{\eps} + x\left( \frac{\leb(h.V_\eps)}{\eps} - |h'(x)|  \right) \right|
\\
& \leq \left| \frac{\eps}{\mu(V_\eps)} - x \right| \frac{\leb(h.V_\eps)}{\eps}  + x \left| \frac{\leb(h.V_\eps)}{\eps} - |h'(x)| \right|
\\
& \leq \left( \left| \frac{\eps}{\mu(V_\eps)} - x\right| + \frac{\eps}{x}  \right) \frac{\leb(h.V_\eps)}{\eps}\,,
\end{align*}
where we used \eqref{eq:genh_intest} for the last inequality. This establishes~\eqref{sum:genh}.

In order to show~\eqref{sum:specialh} let $y\in (0,1]$ and set $h\coloneqq h_{y,\eps,n}$. We note that in this case $y\in h.V_\eps$.
Then
\begin{align}
\nonumber
\Big| \frac{1}{\mu\bigl( V_\eps \bigr)} \leb\bigl( h.V_\eps \cap (0,y] \bigr) &
- x|h'(x)|\delta_{h.x}\bigl( (0,y] \bigr) \Big|
\\
\nonumber
& \quad \leq \frac{1}{\mu\bigl( V_\eps \bigr)} \leb\bigl( h.V_\eps \cap (0,y]
\bigr) + x |h'(x)| \delta_{h.x}\bigl( (0,y] \bigr)
\\
\label{eq:yinhU}
& \quad \leq \frac{1}{\mu\bigl( V_\eps \bigr)} \leb\bigl( h.V_\eps \bigr) + x
|h'(x)|\,.
\end{align}
By Lemma~\ref{lem:abs-h}, $|h'|$ is monotone decreasing. Thus,
\begin{equation}\label{eq:boundhprime}
|h'(x)| \leq \left| h'\left( x - \frac{\eps}{2}\right)\right|
\end{equation}
and further, applying the mean value theorem,
\begin{align}\label{eq:bounddiam}
\leb\bigl( h.V_\eps \bigr) = \left| h.\left( x-\frac{\eps}{2}\right) - h.\left(
x+\frac{\eps}{2}\right) \right| \leq \eps \left|h'\left(x-\frac{\eps}{2}\right) \right|\,.
\end{align}
Using~\eqref{eq:boundhprime} and~\eqref{eq:bounddiam} in~\eqref{eq:yinhU}, we obtain that
\begin{equation}\label{eq:boundinhprime}
\left| \frac{1}{\mu\bigl( V_\eps \bigr)} \leb\bigl( h.V_\eps \cap (0,y] \bigr) -
x|h'(x)|\delta_{h.x}\bigl( (0,y] \bigr) \right| \leq \left(
\frac{\eps}{\mu\bigl( V_\eps \bigr)} + x \right) \left| h'\left( x -
\frac{\eps}2\right) \right|\,.
\end{equation}
By~\eqref{eq:munotzero} and~\eqref{eq:lim_muUeps}, the first factor of this bound is near~$2x$ for the whole range of admissible values for~$\eps$. I.e., there exists $C>0$ (depending on~$x$) such that
\[
\frac{\eps}{\mu\bigl( V_\eps \bigr)} + x \leq (C+1)x \leq C+1
\]
for all $\eps\in (0,\min\{2x,2-2x\})$. (The bounds on~$\eps$ deduce from the hypothesis $V_\eps(x)\subseteq (0,1)$.)
Regarding the second factor of the bound in~\eqref{eq:boundinhprime}, we recall from Lemma~\ref{lem:abs-h} that
\[
g_{q-1}^{-n} = \bmat{1}{0}{n\cw}{1}
\]
is the element of~$W_n$ with largest absolute derivative.
Therefore
\[
\left| h'\left( x - \frac{\eps}2\right) \right| \leq
\left(g_{q-1}^{-n}\right)'\left(x-\frac{\eps}{2}\right) = \frac{1}{\left( n\cw
\left(x-\frac{\eps}{2}\right) + 1 \right)^{2}} \leq
\frac{1}{\cw^2\left(x-\frac{\eps}{2}\right)^2 n^2}\,.
\]
Using the latter two estimates in~\eqref{eq:boundinhprime} we obtain
\[
\left| \frac{1}{\mu\bigl( V_\eps \bigr)} \leb\bigl( h.V_\eps \cap (0,y] \bigr) -
x|h'(x)|\delta_{h.x}\bigl( (0,y] \bigr) \right| \leq \frac{M}{n^2}
\]
with
\[
M \coloneqq \frac{C+1}{\cw^2\left(x-\frac{\eps}{2}\right)^2}\,.
\]
We note that $x-\eps/2\not=0$ by the choice of~$\eps$. We further note that $M$ is indeed independent of~$y$ and~$n$.
\end{proof}

With these preparations we can now provide a proof of Theorem~\ref{thm:jac_weight_lim_intro}, for which it now suffices to show that the difference of the distribution functions in~\eqref{eq:estimsum} vanishes as $n\to\infty$ and $\eps\to 0$ (in this order).

\begin{proof}[Proof of Theorem~\ref{thm:jac_weight_lim_intro}]
We first prove Theorem~\ref{thm:jac_weight_lim_intro} for $x\in (0,1)$.
To that end, we resume the notation from above and continue the estimate in~\eqref{eq:estimsum} by combining it with the bounds from Lemma~\ref{lem:summands}, indicating the dependency of $M$ on~$\eps$ as $M_\eps$.
We have
\begin{align*}
& \bigl| \Delta_{\eps,n}^{\leb}(y) - \Delta_n^\varrho(y)\bigr|
\\
& \hphantom{\Delta^\leb_{\eps,n}(y) -}
\leq \left( \left| \frac{\eps}{\mu(V_\eps)} - x\right| + \frac{\eps}{x}
\right)\frac{\mu(V_\eps)}{\eps}\,\frac{\log n}{\mu(V_\eps)}\sum_{h\in W_n}
\leb(h.V_\eps) + \frac{\log n}{n^2}M_\eps
\\
& \hphantom{\Delta^\leb_{\eps,n}(y) -}
= \left( \left| \frac{\eps}{\mu(V_\eps)} - x\right| + \frac{\eps}{x}
\right)\frac{\mu(V_\eps)}{\eps} \leb_{\eps,n}\bigl( (0,1]\bigr) + \frac{\log
n}{n^2}M_\eps\,.
\end{align*}
As $n\to\infty$, this bound converges to
\[
 \left( \left| \frac{\eps}{\mu(V_\eps)} - x\right| + \frac{\eps}{x} \right)
\leb\bigl( (0,1]\bigr) \frac{\mu(V_\eps)}{\eps} = \left( \left| 
\frac{\eps}{\mu(V_\eps)} - x\right| + \frac{\eps}{x}
\right)\frac{\mu(V_\eps)}{\eps}\,.
\]
due to the weak convergence of~$(\leb_{\eps,n})_n$ to~$\leb$. Thus, 
\[
\bigl| \Delta_{\eps,n}^{\leb}(y) - \Delta_n^\varrho(y)\bigr|
\leq \left( \left| \frac{\eps}{\mu(V_\eps)} - x\right| + \frac{\eps}{x}
\right)\frac{\mu(V_\eps)}{\eps}\,.
\]
For $\eps\to 0$, the term on the left hand side vanishes due to~\eqref{eq:lim_muUeps}. This completes the proof of Theorem~\ref{thm:jac_weight_lim_intro} for $x\in (0,1)$.

For the proof of Theorem~\ref{thm:jac_weight_lim_intro} for $x=1$, we note that for any $p\in\Lambda$, $p.1\not=1$, and that for all $n\in\N$, $W_{n+1} = W_n\Lambda$. We obtain 
\begin{align*}
\log(n+1)\sum_{h\in W_{n+1}} |h'(1)|\delta_{h.1} & = \log(n+1) \sum_{g\in W_n}\sum_{p\in\Lambda} |(gp)'(1)|\delta_{gp.1} 
\\
& = \log(n+1) \sum_{p\in\Lambda} |p'(1)| \sum_{g\in W_n} |g'(p.1)|\delta_{g.(p.1)}
\\
& = \sum_{p\in\Lambda} |p'(1)| \frac{1}{p.1}\, p.1\cdot \log(n+1)\sum_{g\in W_n} |g'(p.1)|\delta_{g.(p.1)}
\\
& \stackrel{n\to\infty}{\longrightarrow} \sum_{p\in\Lambda} |p'(1)| \frac{1}{p.1}\, \leb
\end{align*}
by applying Theorem~\ref{thm:jac_weight_lim_intro} for $p.1\in (0,1)$, $p\in\Lambda$. Further 
\[
 \sum_{p\in\Lambda}|p'(1)|\frac{1}{p.1} = \PF h(1) = h(1) = 1
\]
by Proposition~\ref{prop:eigenfunctionh}. Thus, 
\[
 \log(n+1)\sum_{h\in W_{n+1}} |h'(1)|\delta_{h.1} \stackrel{n\to\infty}{\longrightarrow} \leb\,,
\]
which completes the proof.
\end{proof}

\renewcommand{\bibfont}{\normalfont\small}
\printbibliography

\setlength{\parindent}{0pt}

\end{document}